\documentclass[12pt,a4paper]{amsart}
\usepackage{amssymb,amsmath,amsfonts,pinlabel}
\input xy
\xyoption{all}

\headheight 0.5cm    %0 
\evensidemargin 0cm         %0.5cm 
\oddsidemargin 0.25cm  %0.25cm  
\textwidth 16cm

\numberwithin{equation}{section} 
\numberwithin{equation}{subsection}

\theoremstyle{plain} 
\newtheorem{theorem}[equation]{Theorem} 
\newtheorem{lemma}[equation]{Lemma} 
\newtheorem{proposition}[equation]{Proposition}

\theoremstyle{definition}

\newtheorem{definition}[equation]{Definition}

    %\diamond 

\newcommand{\gras}[1]{{\mathbb #1}} 
\newcommand{\C}{\gras{C}} 
\newcommand{\Z}{\gras{Z}} 
 
\newcommand{\N}{\gras{N}}

%%% ---------------------------------------------------------------------- 
\begin{document}   
\title[Milnor fibers of sandwiched singularities]{On 
the Milnor fibers of sandwiched   singularities} 
\author{\sc Andr{\'a}s N{\'e}methi} 
\address{Renyi Institute of Mathematics, POB 127, H-1364 Budapest, Hungary.} 
\email{nemethi@renyi.hu} 
\author{\sc Patrick Popescu-Pampu} 
\address{Univ. Paris 7 Denis Diderot, Inst. de 
  Maths.-UMR CNRS 7586, {\'e}quipe "G{\'e}om{\'e}trie et dynamique" \\ 
  Site Chevaleret, Case 
  7012, 75205 Paris Cedex 13, France.} 
\email{ppopescu@math.jussieu.fr}

\subjclass{32S55, 53D10, 32S25, 57R17}

\keywords{Sandwiched singularities, versal deformation, 
 smoothings, Milnor fibers, Stein fillings}

%%% ---------------------------------------------------------------------- 
\begin{abstract} The {\em sandwiched} surface singularities  
are those rational surface singularities which dominate birationally 
smooth surface singularities. de Jong and van Straten showed that one 
can reduce the study of the deformations of a 
sandwiched surface singularity to the study of deformations of a 1-dimensional 
object, a so-called {\em decorated} plane curve singularity. In particular, 
the Milnor fibers corresponding to their various smoothing components 
may be reconstructed up to diffeomorphisms from those deformations 
of associated decorated curves which have only ordinary singularities. 
Part of the topology of such a deformation is encoded in the {\em 
incidence matrix} between the irreducible components of the deformed curve 
and the points which decorate it, well-defined up to permutations of
columns. Extending a previous theorem ofours, which treated the case  
of cyclic quotient singularities, we show that the Milnor fibers 
which correspond to  deformations whose incidence matrices are different 
up to permutations of columns are not diffeomorphic in a strong
sense. This gives a lower bound on the
number of Stein fillings of the contact boundary of a sandwiched
singularity. 
  \end{abstract}

\maketitle 
\pagestyle{myheadings} \markboth{{\normalsize 
A. N{\'e}methi and P. Popescu-Pampu}} 
{{\normalsize On  the Milnor fibers of sandwiched singularities}}

% \tableofcontents 

\section{Introduction} \label{intro} 
 
In our previous paper \cite{NPP 08}, we proved a conjecture of Lisca
\cite{L 04, L 08},
establishing a bijective correspondence between the Milnor fibers of
the smoothing components of a cyclic quotient singularity and the
Stein fillings of the corresponding contact lens space, the boundary of the
singularity. As a
particular case of our results, the Milnor fibers corresponding to
distinct smoothing components of the reduced miniversal base space of
the cyclic quotient singularity are
pairwise non-diffeomorphic. Here one has to understand the
diffeomorphisms {\em in a strong sense}: namely, there are natural
identifications of the boundaries of the Milnor fibers up to isotopy,
and we showed that there are no diffeomorphisms extending those
identifications. 

Having in mind the construction of de Jong and van Straten in \cite{JS 98}
regarding the Milnor fibers of sandwiched singularites, it is 
natural to try to extend the above result for such singularities. 
One of the main obstructions of this program is that at the present moment the 
precise description of all the smoothing components of sandwiched 
singularities (or/and the classification of all Stein fillings of the 
corresponding contact boundaries) is  out of hope. 

Nevertheless, we will prove a slightly weaker version of the above result. 
In order to explain it we need some preparation. 

The  class  of sandwiched singularities 
received its name in \cite{S 90} from the fact that
the corresponding singularities are analytically isomorphic to the
germs of surfaces which may be ``sandwiched'' birationally between
two smooth surfaces (see also subsection \ref{SAND}). 
They form a subclass of the rational surface
singularities and their study may be reduced in many respects to the
study of plane curve singularities. 

Indeed, de Jong and  van Straten
showed that this was the case for their deformation theory as well. 
Namely, one may encode a sandwiched singularity $(X,x)$ by a {\em
decorated curve} $(C,l)$, which is a germ of reduced plane curve
whose components $C_i$ are decorated by sufficiently high positive 
integers $l_i$.  One of the main
theorems of \cite{JS 98} states that the deformation properties of a
sandwiched singularity reduces to the deformation properties of any such 
decorated curve which encodes it. 

In particular, the 1-parameter smoothings of $(X,x)$ correspond
bijectively to the so-called {\em picture deformations} of $(C,l)$,
which are $\delta$-constant deformations of $C$ with generic fibers
having only ordinary singularities, accompanied with flat 
deformations of $l$ (seen as a subscheme of the normalization of $C$)
which generically are  reduced and contain the preimage of the singular
locus of the generic fiber. The Milnor fiber corresponding to a
smoothing component of $(X,x)$ may be
reconstructed up to strong diffeomorphism from the embedding in a
4-ball of the generic fiber of a corresponding picture deformation.  

Part of the topological structure of the generic fiber of the
deformation of $(C,l)$  may be encoded in an associated {\em incidence
  matrix} between the irreducible components of the deformation of $C$
and the images of the support of $l$ seen as a subscheme of the
normalization. This matrix is well-defined up to permutation of
columns. 

As it is proved in  \cite{NPP 08}, the validity of Lisca's conjecture
implies that, {\em for cyclic 
  quotient singularities},
the Milnor fibers which correspond to distinct
incidence matrices (up to permutation of columns) are not strongly
diffeomorphic. 

The aim of this paper is to show that {\em the same is true for all
sandwiched surface singularities} (see Theorem \ref{distinc}). 

In fact, for any fixed realization of $(X,x)$  by a decorated curve 
$(C,l)$, we provide a canonical method to reproduce all the incidence matrices
(associated with $(C,l)$) from the Milnor fibers. Hence, each realization
$(C,l)$ provides a `test' to separate the Milnor fibers (in fact, precise 
criteria to recognize  them). 

Recall also that de Jong and van Straten predict (see \cite[(4.3)]{JS 98})
that, for any fixed $(C,l)$, the set of smoothing components of the 
deformation space of $(X,x)$ injects into the set of incidence matrices 
associated with $(C,l)$. In particular, for any class of singularities, when 
this is indeed the case, we obtain via our result that there is no pair of 
smoothing components with (strongly) diffeomorphic Milnor fibers. 

In order to connect our main result with the set of Stein fillings of the 
contact boundary, we have to face a new aspect of the problem: sandwiched
singularities are not necessarily taut (that is, their analytical
structure is not determined by their topology). Moreover, fixing a 
topological type, for different
analytic types one might have different structure of the miniversal
deformation space (therefore different sets of Milnor fibers). 
Nevertheless, for a fixed topological type, the induced natural contact
structure on the boundary is independent of the analytic realization and 
is invariant up to isotopy by all orientation-preserving
diffeomorphisms (cf. \cite{CP 04}).

Therefore, in Theorem \ref{lowbound} we are able to extend the above 
recognition criterion of the Milnor
fibers via incidence matrices by considering all the smoothings associated
with the deformations of all the decorated germs of plane curves with a fixed
topology. In this way we get a lower bound for the
number of Stein fillings of the contact boundary of a sandwiched
surface singularity, considered up to orientation-preserving
diffeomorphisms fixed on the boundary.

\subsection{Conventions and notations} \label{conv} 
All the differentiable manifolds we consider are oriented: any
letter, say $W$,  
 denoting a manifold denotes in fact an oriented  manifold. 
We denote by $\overline{W}$ the manifold 
obtained by changing the orientation of $W$, 
and by $\partial W$ its boundary, 
canonically oriented by the rule that the outward 
normal \emph{followed} by the orientation of $\partial W$ gives the 
orientation of $W$. 
 
We work exclusively with homology groups \emph{with 
integral coefficients}. If $W$ is 4-dimen\-sional, the intersection number 
in $H_2(W)$ is denoted by $S_1\cdot S_2$. 
An element of $H_2(W)$ is called \emph{a $(-1)$-class}
if its self-intersection is equal to $-1$.  

If $W_1$ and $W_2$ are two oriented manifolds with boundary, 
endowed with a fixed isotopy class of orientation-preserving diffeomorphisms 
$\partial W_1 \rightarrow \partial W_2$, we say that $W_1$ and $W_2$ 
are {\em strongly diffeomorphic} (with respect to that class) if there 
exists an orientation-preserving diffeomorphism $W_1 \rightarrow W_2$ 
whose restriction to the boundary belongs to the given class.

 \medskip 
\section{Generalities} 
\label{general}

The aim of this section is to recall briefly the needed facts about
deformation theory, $\delta$-constant deformations and the topology of
surface singularities. 
In what follows, a \emph{singularity} will design a germ of reduced
complex analytic space.

\subsection{Deformations and smoothings} \label{defsm}

Let $(X,x)$ be an isolated singularity. Choose one of its representatives
embedded in some affine space: $(X,x) \hookrightarrow (\C^n,0)$. Denote
by $\mathbb{B}_r\subset \C^n$ the compact ball of radius $r$ centered at the
origin and by $\mathbb{S}_r$ its boundary. By a general theorem (see
\cite{L 84}), there exists $r_0 >0$ such that $X$ is transversal to
the euclidean  spheres $\mathbb{S}_r$ for any $r \in (0,
r_0]$. $\mathbb{B}_r$ is then called a \emph{Milnor ball} for the
chosen embedding and such a
representative $X \cap \mathbb{B}_r$ is called a \emph{Milnor
  representative} of $(X,x)$. The manifold $X \cap \mathbb{S}_r$,
oriented as boundary of the complex manifold $(X \cap
\mathbb{B}_r)\setminus \{x\}$ is  well-defined
up to orientation-preserving diffeomorphisms, and is called the
\emph{(abstract) boundary} of $(X,x)$. We denote it by
$\partial(X,x)$. 
 
 \begin{definition} \label{miniv} 
  Let $(X,x)$ be a singularity. A 
  {\it deformation} of $(X,x)$ is a germ of flat morphism 
  $\pi:(Y,y)\rightarrow (S,0)$ together with an 
  isomorphism between $(X,x)$ and the special fiber $\pi^{-1}(0)$. 
  A \emph{1-parameter smoothing} is a deformation over a germ of smooth
  curve such that the generic fibers are smooth.
  
 A deformation $\pi:(Y,y)\rightarrow (S,0)$ of $(X,x)$ is {\it
    versal} if any other deformation is 
  obtainable from it by a base-change. 
 A versal deformation is {\it  miniversal} if  
  the Zariski tangent space of its base $(S,0)$ has the smallest possible 
  dimension. 
 A {\it smoothing component} is an irreducible component 
  of the miniversal 
  base space  over which the generic  fibers are smooth. 
\end{definition} 
 
If $(X,x)$ is a germ of reduced complex analytic space \emph{with 
an isolated singularity}, then the following well-known facts 
hold: 
 
  (i)  (Schlessinger \cite{S 68}, Grauert \cite{G 72}) The miniversal 
  deformation $\pi$ 
  exists and is unique up   to (non-unique) isomorphism. 
 
(ii) There exist (Milnor) representatives $Y_{red}$ and $S_{red}$ 
of the reduced total and base spaces of $\pi$ such that the 
restriction $\pi:\partial Y_{red} \cap\pi^{-1}(S_{red}) 
\rightarrow S_{red}$ is a trivial $C^{\infty}$-fibration. 
 
\vspace{2mm} 
 
Hence, for each smoothing component of $(X,x)$,  the oriented
diffeomorphism type  of the oriented manifold with boundary  
 $(\pi^{-1}(s)\cap Y_{red},\pi^{-1}(s)\cap \partial Y_{red})$ does not depend 
 on the choice of the generic element   $s$: it
 is called \emph{the Milnor fiber} of that component. Moreover, 
its boundary   is canonically 
identified with the boundary $\partial(X,x)$ by an
orientation-preserving diffeomorphism,  \emph{up to isotopy}. 

Therefore, if one takes the Milnor fibers corresponding to two
distinct components of $S_{red}$, one may ask whether they are
\emph{strongly diffeomorphic} (a notion introduced in subsection
\ref{conv})  with respect to the previous natural
identification of their boundaries.

\subsection{The $\delta$-invariant and $\delta$-constant
  deformations.} \label{delta}
 
 Let $(C,0)$ be a (not necessarily plane) curve singularity. Its
 $\delta$-\emph{invariant} is by definition:
  $$ \delta(C,0):= \dim_{\C}( \nu_* \mathcal{O}_{\tilde{C},
    \tilde{0}} / \mathcal{O}_{C,0}),$$
where $(\tilde{C}, \tilde{0}) \stackrel{\nu}{\rightarrow} (C,0)$
is a normalization morphism (in particular, $(\tilde{C}, \tilde{0})$
denotes a multi-germ). 
If $C$ is a reduced curve, by definition, its $\delta$-\emph{invariant} is the
sum of the $\delta$-invariants of all its singular points. 

As an example, consider an \emph{ordinary $m$-tuple
  point} of a reduced curve embedded in a smooth surface, that
is a point where the curve has exactly $m$ local irreducible components, all 
smooth and meeting pairwise transversally. At such a point, the
$\delta$-invariant of the curve is equal to $\frac{m(m-1)}{2}$. 

More generally, if $(C,0)$ is a germ of plane curve, one has the
following classical formula:
\begin{equation} \label{deltgen}
 \delta(C,0)= \sum_{P} \frac{m(C,P)(m(C,P)-1)}{2}
\end{equation} 
where $P$ varies among all the points infinitely near $0$ on the
ambient surface and $m(C,P)$ denotes the multiplicity of the strict
transform of $C$ at such a point $P$ (see \cite[pages 298 and 393]{H 77}).

Denote by $(\Sigma,0)  \stackrel{\pi}{\rightarrow} (S,0)$ a
deformation of $(C,0)$ over a smooth germ of curve. It is called
$\delta$-\emph{constant} if there exists a Milnor representative of
$\pi$ such that the $\delta$-invariant of its fibers is constant. 
By a characterization of Teissier  \cite{T 80} (see also 
\cite{CL 06}), a deformation $\pi$
is $\delta$-constant if and only if the normalization
morphism of $\Sigma$ has the form $\tilde{C}\times S\to \Sigma$, and it is
the \emph{simultaneous normalization} of the fibers of $\pi$. 
In particular, if one has a $\delta$-constant embedded deformation of a germ of
\emph{plane} curve $(C,0)$ such that the general fiber $C_s$ has only
ordinary multiple points, then after a choice of a Milnor
representative of the morphism, the general fibers are immersed
\emph{discs} in a $4$-ball.

\subsection{The topology of normal surface singularities}
  \label{toponorm}

Consider a normal surface singularity $(X,x)$. It has a unique 
minimal normal crossings resolution,
that is, a resolution whose exceptional set $E$ is a divisor with normal
crossings and which looses this property if one contracts any
$(-1)$-curve of it. Consider also the \emph{weighted dual graph} of this
resolution. Each vertex is weighted by the genus and by the
self-intersection number of the corresponding irreducible component of
the divisor $E$. 

From the resolution, the abstract boundary
$\partial(X,x)$ inherits a \emph{plumbing structure}, that is, a
family of pairwise disjoint embedded tori whose complement is fibered by
circles and such that on each torus, the intersection number of the
fibers from each side is 
$\pm 1$. The tori correspond to the edges of the dual graph and the
connected components of their complement correspond to the
vertices.  For more details, see \cite{N 81} or \cite{PP 07}.

Neumann proved in \cite{N 81} that the knowledge of the boundary of
$(X,x)$, up
to orientation-preserving homeomorphisms, is equivalent to the
knowledge of the weighted dual graph of the minimal normal crossings
resolution, up to isomorphisms. In \cite[Theorems 9.1 and 9.7]{PP 07}
was proved the following enhancement of Neumann's result:

\begin{theorem} \label{invar}
  The plumbing structure corresponding to the minimal normal crossings
  resolution is determined by the oriented boundary (as abstract manifold);
  in paticular, it is invariant, up to isotopy, by all orientation-preserving 
  diffeomorphisms of the boundary. 
\end{theorem}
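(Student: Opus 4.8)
The plan is to take Neumann's theorem (quoted above) as the combinatorial input---that the weighted dual graph $\Gamma$ of the minimal normal crossings resolution is reconstructed from the oriented $\partial(X,x)$ up to isomorphism---and to promote it to a geometric statement about the embedded tori and their circle fibrations. First I would record the structural facts that make the graph-manifold machinery of Waldhausen and Neumann \cite{N 81} available. Because the resolution carries a negative-definite intersection form, $\partial(X,x)$ contains no essential sphere (the non-negative framings that would accompany a connected-sum or an $S^1\times S^2$ summand are excluded), so it is irreducible; and the plumbing tori, except for those bounding a solid-torus Seifert piece $D^2\times S^1$ over a genus-$0$ valence-$1$ vertex, are incompressible. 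Thus the plumbing structure is a \emph{Waldhausen graph structure}: a system of disjoint tori cutting $\partial(X,x)$ into Seifert-fibered pieces, one over each vertex surface, glued so that the two fiber classes pair to $\pm 1$ across every torus.

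The core of the argument is to show that any two such structures inducing the same minimal normal crossings graph $\Gamma$ are carried into one another by an ambient isotopy; I would do this in two stages. For the tori, I would first recover the Jaco--Shalen--Johannson family, which is embedded canonically up to isotopy. The minimal normal crossings tori refine this family: the additional tori are exactly those that bound the solid-torus pieces over the leaves of $\Gamma$ and those that subdivide a Seifert piece into the vertex pieces prescribed by $\Gamma$. Since $\Gamma$ is already fixed by Neumann's theorem, the positions of these extra tori are dictated combinatorially and, being fibered tori inside the recovered Seifert pieces, are determined up to isotopy. For the fibrations, I would invoke the uniqueness up to isotopy of the Seifert fibration on an orientable Seifert-fibered manifold, valid outside a short explicit list of exceptions; the $\pm 1$ pairing of fibers across each torus then rigidifies how the fibrations on neighbouring pieces are matched. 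Assembling the torus system and the fibrations over $\Gamma$ produces an isotopy realizing the identification, which is exactly the assertion that the plumbing structure is invariant up to isotopy under every orientation-preserving self-diffeomorphism of the boundary.

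The main obstacle I anticipate is the exceptional manifolds on Neumann's list---lens spaces, $S^1\times S^2$, prism and flat ($T^3$-type) manifolds, and the few small Seifert spaces---on which the underlying manifold does not determine its fibration, or which admit tori that can be isotoped across a piece, so that the clean uniqueness above fails. These must be handled individually: for each such piece I must verify either that it cannot occur in the minimal normal crossings resolution of a \emph{normal surface singularity} (using negative-definiteness together with the normal-form conventions of Neumann's plumbing calculus), or that the minimal normal crossings convention itself singles out a canonical fibration among the competing ones. This case-by-case matching of Neumann's ambiguity list against the constraints of being a singularity link is where the real work lies, and constitutes the content of \cite[Theorems 9.1 and 9.7]{PP 07}.
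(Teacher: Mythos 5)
The first thing to say is that the paper contains no proof of Theorem \ref{invar}: it is quoted as an external result, attributed to \cite[Theorems 9.1 and 9.7]{PP 07}, so there is no internal argument to measure yours against. Your outline is the natural strategy and is, in broad strokes, the one the reference follows: take Neumann's reconstruction of the weighted graph as input, use negative-definiteness to get irreducibility and incompressibility, and then rigidify first the torus family and then the fibrations using Waldhausen/JSJ theory and the plumbing calculus of \cite{N 81}. You are also right that the fibers must be treated as unoriented circles, a point the paper stresses immediately after the statement.

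The genuine gap is in how you dispose of the ``exceptional'' pieces. For the singularities relevant here (sandwiched, hence rational), the minimal resolution graph is a tree of rational curves, so the boundary is assembled almost entirely from solid tori and thickened tori $T^2\times I$ coming from the genus-zero vertices of valence one and two. On these pieces Seifert-fibration uniqueness fails completely --- $T^2\times I$ is fibered by circles of every slope --- and the JSJ family is far coarser than the plumbing family: a lens space has empty JSJ collection, while the plumbing structure of a length-$k$ chain has $k-1$ tori carrying $k$ pairwise distinct fibrations. So the situation you file under ``a short explicit list of exceptions,'' to be either excluded or checked case by case, is in fact the generic and central case, and it cannot be excluded. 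The mechanism that actually pins down the slopes on the successive $T^2\times I$ pieces is not Seifert uniqueness but the arithmetic rigidity of the normal form --- the uniqueness of the negative continued fraction expansion with all entries at most $-2$ --- which is precisely the ``continued fractions'' half of the title of \cite{PP 07}. Without that ingredient your two-stage argument (JSJ tori, then fibration uniqueness) does not close on exactly the manifolds this paper needs.
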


It is important to note that in the definition of a plumbing
structure, the circles of the fibration of the complement of the tori
are considered \emph{unoriented} (otherwise the previous theorem would
not be true). But in the case of the boundary of a surface
singularity, once a fiber is oriented, there is a canonical way 
to orient all of them (see \cite[Corollary 8.6]{PP 07}). 
 
 Note also that any fiber has a natural framing, given by the nearby
 fibers. We say that it is \emph{the framing coming from the plumbing
   structure}.

\medskip 
\section{The  smoothings of sandwiched surface 
singularities, \\ 
following  de Jong and van Straten} 
\label{topsmooth} 

de Jong and van Straten 
related in \cite{JS 98}  the deformation theory of sandwiched 
surface singularities to the deformation theory of so-called 
\emph{decorated plane curve 
  singularities}. They showed that 1-parameter deformations of
particular 
  decorated curves provide 1-parameter deformations for sandwiched 
  singularities, and that all of these later ones can be obtained in 
  this way. Moreover, the Milnor fibers of those  which are smoothings 
  can be combinatorially described by  the so-called \emph{picture 
    deformations}. The aim of this section is to recall briefly this
  theory. 
\medskip

\subsection{Basic facts about sandwiched singularities}\label{SAND}

 A normal surface 
singularity $(X,x)$ is called {\it sandwiched} (see \cite{S 90}) if it
is analytically isomorphic to a germ 
of algebraic surface which admits a birational map $X\to \C^2$.  See 
also \cite{JS 98, L 00} for other view-points. 
 
 Sandwiched singularities are rational, therefore their minimal normal
 crossings resolution coincides with their minimal resolution and the
 associated weighted graph is a negative-definite tree of rational
 curves. They are  
characterized (like the rational singularities) by their dual 
resolution graphs, which allows to speak about \emph{sandwiched 
graphs}: 
 
\begin{proposition} \label{charsand} 
A graph $\Gamma$ is sandwiched  if  by gluing (via new edges) to some of the
vertices of $\Gamma$  new  rational vertices with self-intersections $-1$, 
one may obtain a `smooth graph', i.e.  the dual tree of  
a configuration of ${\mathbb P}^1$'s which blows down to a smooth 
point. 
\end{proposition}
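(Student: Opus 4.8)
The plan is to prove the statement as a characterization, establishing both that the displayed condition is sufficient for $\Gamma$ to be sandwiched and that it is necessary. Throughout I will use that a \emph{smooth graph}, being by definition the dual graph of the exceptional configuration of a finite sequence of point blow-ups $Y\to\C^2$ over a smooth point, carries a negative-definite intersection form; consequently every full subconfiguration of it is again negative definite. The two directions are then obtained by translating between the birational map defining ``sandwiched'' and the blow-up structure producing a smooth graph. I will take the map $X\to\C^2$ to be a birational morphism, as is standard for sandwiched singularities.

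For the sufficiency, suppose that by gluing $(-1)$-vertices to some vertices of $\Gamma$ one obtains a smooth graph $G$. Realize $G$ as the exceptional divisor $D$ of a blow-down $Y\to\C^2$, and let $F\subset D$ be the subconfiguration of curves indexed by the vertices of $\Gamma$. By the remark above $F$ is negative definite, so by Grauert's contractibility criterion there is a contraction $c:Y\to X$ collapsing $F$ to a single normal point $x$ and nothing else; the exceptional divisor of $c$ is exactly $F$, so the dual graph of this resolution of $(X,x)$ is $\Gamma$. Composing $c^{-1}$ with $Y\to\C^2$ gives a birational map $X\to\C^2$, so $(X,x)$ is sandwiched and $\Gamma$ is a sandwiched graph. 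The only point needing care is that $\Gamma$, having no $(-1)$-vertices, is indeed the minimal resolution graph of $(X,x)$.

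For the necessity, assume $(X,x)$ is sandwiched, so there is a birational morphism $\phi:X\to\C^2$, and let $\rho:\tilde X\to X$ be the minimal resolution, with $E=\rho^{-1}(x)$ of dual graph $\Gamma$. Then $\phi\circ\rho:\tilde X\to\C^2$ is a birational morphism between smooth surfaces, hence a finite composition of point blow-ups, and its exceptional divisor $D$ has smooth dual graph $G$. Since $\phi$ is a morphism, $E$ is contracted to the point $\phi(x)$, whence $E\subseteq D$ and $\Gamma$ sits inside $G$ as the full subgraph on the vertices indexing $E$.

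It then remains to arrange that the complementary vertices $G\setminus\Gamma$ are $(-1)$-vertices glued to $\Gamma$, and \emph{this refinement is the step I expect to be the main obstacle}: the curves of $D\setminus E$ need not a priori be $(-1)$-curves. I plan to overcome it by choosing the factorization $\tilde X\to\C^2$ minimally and then repeatedly contracting those $(-1)$-curves of the exceptional divisor that are disjoint from $E$, the point being to show that this process terminates with every remaining curve outside $E$ a $(-1)$-curve meeting $E$. Verifying termination and this final shape is exactly the combinatorial heart of the statement, and is where the detailed analysis of blow-up configurations of \cite{S 90} should be invoked.
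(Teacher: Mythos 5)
The paper itself gives no proof of Proposition \ref{charsand}: it is recalled as a known characterization from Spivakovsky \cite{S 90}, so there is no argument of the authors to compare yours against, and I judge your proposal on its own merits. Note also that, although the statement literally reads as an ``if'', the paper uses it later in the ``only if'' direction (to produce a standard decorated germ for \emph{every} sandwiched singularity), so you are right to treat it as a characterization. Your sufficiency direction is essentially complete: a full subconfiguration of the exceptional divisor of a blow-down $Y\to\C^2$ is negative definite, Grauert contracts it, and since $X$ is normal the blow-down $Y\to\C^2$ factors through the contraction, giving a birational \emph{morphism} $X\to\C^2$; the caveat you raise about $(-1)$-vertices in $\Gamma$ is only a matter of convention about whether ``sandwiched graph'' refers to the minimal resolution.

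The necessity direction, however, has a genuine gap, and it sits exactly where you flag it. The procedure you propose --- contract the $(-1)$-curves of $D$ disjoint from $E$ and hope that what survives outside $E$ consists of $(-1)$-curves meeting $E$ --- already fails for the cone over the rational normal cubic. Blow up $0\in\C^2$, then a point on the exceptional curve, then the node of the resulting divisor: one obtains a chain of three curves $A$, $C$, $B$ with self-intersections $-3$, $-1$, $-2$, where $C$ meets both $A$ and $B$. Contracting $A$ yields a sandwiched singularity with $\Gamma$ the single $(-3)$-vertex, the only $(-1)$-curve of $D$ is $C$, which meets $E=A$, so your process halts immediately, and yet $B$ is a $(-2)$-curve not attached to $E$. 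The correct smooth extension of $\Gamma$ (a $(-3)$-vertex with \emph{two} $(-1)$-vertices attached) arises from a \emph{different} blow-up sequence, not by contracting curves of the given one. What works instead is an induction on the number of blow-ups in $Y\to\C^2$: let $F$ be the exceptional curve of the last blow-up, a $(-1)$-curve necessarily outside $E$; if $F$ is disjoint from $E$, contract it and recurse; if $F$ meets a component $E_i$ of $E$ (necessarily a single component at a single point, since $D$ is a tree and $E$ is connected), contract it anyway, apply the inductive hypothesis to the image configuration in which $E_i^2$ has increased by $1$, and then re-attach one extra $(-1)$-vertex to $E_i$ in the smooth graph so obtained. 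This ``contract but remember'' bookkeeping is the combinatorial heart you correctly identify but do not supply; deferring it wholesale to \cite{S 90} leaves the half of the proposition that the paper actually relies on unproved.
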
 

The way to add such $(-1)$-vertices is not unique. But once
such a choice was done, for each sandwiched singularity with that
topology, one may embed a tubular neighborhood of the exceptional set
$E$ of the minimal resolution in a larger surface which contains also some
$(-1)$-curves, whose union with $E$ can be contracted and has as dual graph  
the chosen `smooth graph'. This procedure will be used in the next
subsection.

\subsection{Decorated curves and their deformations}\label{decor} 
Any sandwiched singularity may be obtained from a weighted curve 
$(C,l)$. Here $(C,0)\subset (\C^2,0)$ denotes a reduced germ of 
plane curve \emph{with numbered branches}
(irreducible components) $\{C_i\}_{1 \leq i \leq r}$
and $(l_i)_{1 \leq i \leq r} \in (\N^*)^r$. Let us explain this.

Consider the 
minimal abstract resolution of $C$ obtained by a sequence of 
blowing-ups of (closed) points. If one realizes those blowing-ups
inside the smooth 
ambient surface, one does not necessarily obtain an embedded
resolution of $C$, that is, the total transform of $C$ is not
necessarily a divisor with normal crossings.  

The \emph{multiplicity  sequence}   
associated with $C_i$ is the sequence of multiplicities on the 
successive strict transforms of $C_i$, starting from $C_i$ itself 
and not counting the last strict transform.  The {\it total 
multiplicity} $m(i)$  of $C_i$ with respect to $C$ is the sum of 
the sequence of multiplicities of $C_i$ defined before.

\begin{definition} \cite[(1.3)]{JS 98} \label{decgerm} 
  A {\it decorated germ} of plane curve is a weighted germ $(C,l)$ 
  such that $l_i\geq m(i)$ for all \ $  i \in \{1,\ldots,r\}$. 
\end{definition} 
 
The point is that,   
starting from a decorated  germ, one can blow up iteratively points 
infinitely near $0$ on the strict transforms of $C$, such that the sum
of multiplicities of the strict transform of $C_i$ at 
such points is exactly $l_i$. Such a composition of blow-ups is
determined canonically by $(C,l)$. 
If $l_i$ is sufficiently  large (in general this bound is larger than $m(i)$,
see below),  then the union of the 
exceptional components which do not meet the strict transform of 
$C$ form a \emph{connected} configuration of (compact) curves $E(C,l)$. After
the contraction of $E(C,l)$, one gets a sandwiched singularity 
$X(C,l)$, determined uniquely by $(C,l)$ (for details see  
\cite{JS 98}). 
 
Using Proposition \ref{charsand}, for any sandwiched singularity $(X,x)$
one can find  $(C,l)$ such that $(X,x)$ can be represented as $X(C,l)$. 
Indeed, choose an
extension of the sandwiched graph to a smooth graph by adding
$(-1)$-vertices (different choices provide different realizations
$X(C,l)$). 
Fix a geometrical realization of this extension starting from
the minimal resolution of the given sandwiched
singularity (by gluing tubular neighbourhoods of the new 
$(-1)$-curves). Consider then a \emph{curvetta} (that is, a smooth transversal
slice through a general point) for each $(-1)$-component and blow down the 
union of these curvettas to the smooth surface obtained by contracting
the configuration with smooth graph. 
Number then the components of the plane germ $C$
obtained in this way and consider the function $l$ describing the
reverse process of reconstruction of the initial configuration of
compact curves and curvettas. One obtains a decorated germ $(C,l)$
such that $X(C,l)$ is the original singularity.

We introduce the
following definition:  

\begin{definition}
$(C,l)$ is called  \emph{a standard decorated germ} if it was obtained in the
previous way. 
\end{definition}

An example of the previous process applied to a cyclic quotient
singularity is indicated in Figure \ref{fig:Extgraf}. A representative
of the standard decorated germ obtained like this is $C=C_1 \cup C_2$,
where $C_1$ is defined by $y^2-x^3=0$, $C_2$ is defined by $y=0$ and
$l_1=6, l_2=3$. One has $m(1)=3, m(2)=2$. 

\medskip 
\begin{figure}[h!] 
\labellist \small\hair 2pt 
\pinlabel{$-3$} at 3 112
\pinlabel{$-2$} at 58 112
\pinlabel{$-3$} at 111 112
\pinlabel{$-1$} at 238 112
\pinlabel{$-3$} at 291 112
\pinlabel{$-2$} at 347 112
\pinlabel{$-3$} at 401 112
\pinlabel{$-1$} at 323 37
\pinlabel{$C_1$} at 637 141
\pinlabel{$C_2$} at 510 72
\pinlabel{$6$} at 662 28
\pinlabel{$3$} at 662 82
\endlabellist 
\centering 
\includegraphics[scale=0.60]{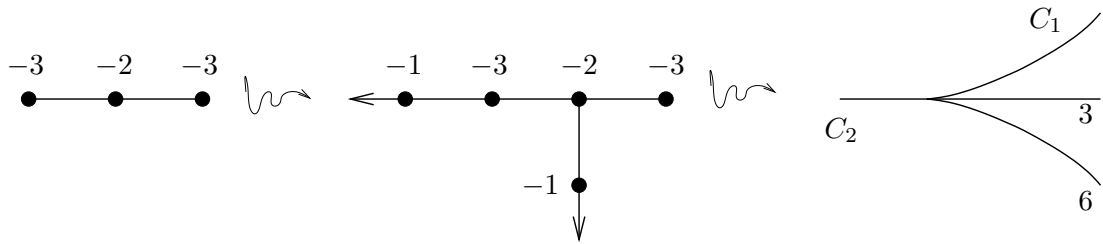} 
\vspace{2mm} \caption{A construction of standard decorated germ} 
\label{fig:Extgraf} 
\end{figure}

We wish to emphasize again, that for standard decorated germs the integers
$l_i$ usually are larger than $m(i)$. The `right'  bound is
the following.  
Consider the \emph{minimal embedded resolution} of $(C,0)
\hookrightarrow (\C^2,0)$ by blowing-ups of closed points. That is,
one does not only ask that the strict transform of $C$ 
be smooth, but also that its total transform be a normal
crossings divisor. For each $i \in \{1,...,r\}$, denote by $M(i)$ (see
\cite[page 456]{JS 98}) the sum of multiplicities of the strict
transforms of $C_i$, before arriving at this embedded resolution (for
instance, for the germ represented in Figure \ref{fig:Extgraf}, one
gets $M(1)=5$ and $M(2)=2$).  One may see easily that $(C,l)$ is a
standard decorated germ 
if and only if $l_i \geq M(i) +1$, for all $i \in \{1,...,r\}$. 

Consider again an \emph{arbitrary} decorated germ $(C,l)$. 
The total multiplicity of $C_i$ with respect to $C$ may be encoded 
also as the unique subscheme of length $m(i)$ supported on the 
preimage of $0$ on the normalization of $C_i$. This allows to define
the \emph{total  
multiplicity scheme} $m(C)$ of any reduced curve contained in a 
smooth complex surface, as the union of the total multiplicity 
schemes of all its germs. The deformations of $(C,l)$ considered by de
Jong and van Straten are: 
 
\begin{definition} (i) \cite[(4.1)]{JS 98} \label{defcurve} 
  Given a smooth complex analytic surface $\Sigma$, a  pair $(C,l)$ consisting 
  of a reduced curve  $C\hookrightarrow \Sigma$ and a subscheme $l$ of 
  the normalization $\tilde{C}$ of $C$ is called {\it a decorated 
    curve} if $m(C)$ is a subscheme of \, $l$. 
 
\vspace{1mm} 
 
 (ii) \cite[p. 476]{JS 98} 
  A $1$-{\it parameter deformation} of a decorated curve $(C,l)$ 
  over a germ of smooth curve $(S,0)$ consists of: 
 
(1) a $\delta$-constant deformation $C_S \rightarrow S$ of $C$; 
 
(2) a flat deformation $l_S \subset \tilde{C_S} =\tilde{C}\times 
S$ of the scheme $l$, such that: 
 
(3) $m_S\subset l_S$, where the \emph{relative total multiplicity 
  scheme} $m_S$ of $\tilde{C_S}\rightarrow C_S$ is defined as the 
closure $\overline{\bigcup_{s \in S \setminus 0} m(C_s)}$. 
 
\vspace{1mm} 
 (iii)  A $1$-parameter deformation $(C_S, l_S)$ is called a 
   {\it picture deformation} if for generic $s \neq 0$ the divisor 
   $l_s$ is reduced. 
\end{definition} 

It is an immediate consequence of the definitions that for a picture
deformation $C_S$, the singularities of $C_{s \neq 0}$ are only
ordinary multiple points. Therefore, it is easy to draw a real picture
of such a deformed curve, which motivates the name. As an example, in
Figure \ref{fig:Pictdef} we represented a generic fiber $C_{s\neq 0}$ and the
image of the support of the subscheme $l_s$ of its normalization for
one of the picture deformations of the standard decorated curve obtained in the
Figure \ref{fig:Extgraf}.

\medskip 
\begin{figure}[h!] 
\labellist \small\hair 2pt 
\pinlabel{$(C_1)_s$} at 332 135
\pinlabel{$(C_2)_s$} at 12 52
\endlabellist 
\centering 
\includegraphics[scale=0.55]{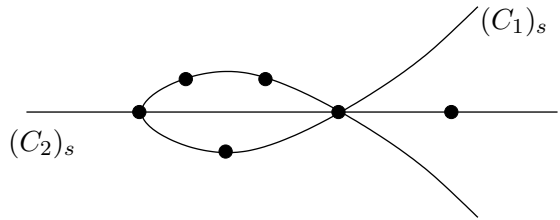} 
\vspace{2mm} \caption{A picture deformation of a standard decorated germ} 
\label{fig:Pictdef} 
\end{figure}

de Jong and van Straten proved that part of the deformation theory of
sandwiched surface singularities may 
be reduced to that of $1$-parameter deformations of decorated germs:
 
\begin{theorem} \cite[(4.4)]{JS 98} All the 1-parameter
  deformations of  
$X(C,l)$ are obtained by $1$-parameter deformations of the 
decorated germ $(C,l)$. Moreover, \emph{picture 
  deformations} provide all the  smoothings of $X(C,l)$. 
\end{theorem}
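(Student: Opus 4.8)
The plan is to prove the theorem by establishing a dictionary between the deformation theory of the surface germ $X(C,l)$ and that of the one-dimensional decorated datum $(C,l)$, and then transporting the notion of smoothness of the fibers across this dictionary. The assertion splits naturally into a construction statement (every deformation of $(C,l)$ produces a deformation of $X(C,l)$), a surjectivity statement (every deformation of $X(C,l)$ arises this way), and the identification of smoothings with picture deformations.

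First I would treat the construction of surface deformations out of deformations of the decorated curve. Recall from subsection \ref{decor} that $X(C,l)$ is produced from $(C,l)$ by a canonical chain of blow-ups of infinitely near points, dictated by the decoration $l$, followed by the contraction of the exceptional configuration $E(C,l)$ disjoint from the strict transform of $C$. Given a $1$-parameter deformation $(C_S,l_S)$ in the sense of Definition \ref{defcurve}, I would carry out this blow-up/blow-down procedure in family over $S$. The inclusion $m_S\subset l_S$ of the relative total multiplicity scheme guarantees that for each $s$ the prescribed infinitely near points genuinely lie on the strict transform of $C_s$, so that the blow-ups can be performed relatively; $\delta$-constancy of $C_S\to S$, in the form of Teissier's simultaneous-normalization criterion recalled in subsection \ref{delta}, is what keeps the total space normal and the whole construction flat. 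The outcome is a flat family of surfaces with special fiber $X(C,l)$, i.e. a deformation of the sandwiched germ.

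The harder half is surjectivity: \emph{every} $1$-parameter deformation of $X(C,l)$ arises in this way. Here I would exploit the defining feature of a sandwiched singularity, namely the birational morphism $X(C,l)\to \C^2$ underlying the realization, together with its resolution-theoretic description supplied by Proposition \ref{charsand}. Given an abstract deformation of $X(C,l)$, the strategy is to blow down, in family, the exceptional configuration produced above. Rationality of sandwiched singularities, which propagates to the nearby fibers, is the crucial input: it forces the relevant higher direct images to vanish, so that the birational contraction can be performed simultaneously over $S$, and tracking the image of the exceptional locus together with its natural scheme structure recovers a curve $C_S$ carrying a decoration $l_S$. At the infinitesimal level this amounts to producing an isomorphism between the first-order deformation module $T^1_{X(C,l)}$ and the corresponding deformation module of the decorated curve, compatibly with the obstruction spaces, and then checking that this comparison integrates to an equivalence of deformation functors. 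Establishing this comparison is the main obstacle, and is where the bulk of the work of \cite{JS 98} lies.

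Finally I would match smoothings with picture deformations. Under the dictionary above, the generic surface fiber is obtained from $C_s$ by blowing up the infinitely near points encoded in $l_s$ and contracting the resulting exceptional curves; this fiber is smooth precisely when no exceptional configuration survives the contraction, which happens exactly when the points have pulled apart into distinct, reduced points on the normalization, that is, when $l_s$ is reduced, the defining condition of a picture deformation in Definition \ref{defcurve}(iii). Conversely a picture deformation forces $C_s$ to have only ordinary multiple points, so that the surface contraction produces no singularity and the associated surface deformation is a smoothing. Combining the two directions yields the stated correspondence between picture deformations of $(C,l)$ and smoothings of $X(C,l)$.
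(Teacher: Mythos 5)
This statement is not proved in the paper at all: it is quoted verbatim from de Jong and van Straten as \cite[(4.4)]{JS 98}, and the authors use it as a black box. So there is no proof in the paper to compare yours against, and any genuine proof would have to reproduce a substantial part of \cite{JS 98}.

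Measured against that standard, your proposal is a reasonable road map but not a proof. The decomposition into (a) a construction of surface deformations from deformations of $(C,l)$, (b) surjectivity, and (c) the identification of smoothings with picture deformations matches the actual structure of the result, and the final step --- that $l_s$ reduced together with $C_s$ having only ordinary singularities forces the exceptional configuration $E(C_s,l_s)$ to be empty, hence $X(C_s,l_s)$ smooth --- is essentially right. But the heart of the theorem is step (b), and there you only gesture at it: you invoke rationality and vanishing of higher direct images to contract in family, and then say one should ``produce an isomorphism between $T^1$ of the surface and the deformation module of the decorated curve, compatibly with obstruction spaces,'' explicitly conceding that ``establishing this comparison is the main obstacle.'' That comparison is precisely the content of the theorem; de Jong and van Straten prove it through a chain of intermediate functors (deformations of the pair given by the blown-up surface and the strict transform, and their descent to the decorated curve), none of which appears in your sketch. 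One smaller caveat: your claim that $\delta$-constancy ``keeps the total space normal and the whole construction flat'' is asserted rather than argued; what $\delta$-constancy actually buys, via Teissier's criterion, is the simultaneous normalization $\tilde{C}\times S\to C_S$, which is what allows $l_S\subset \tilde{C}\times S$ to make sense as a flat family of subschemes and the infinitely near points to be followed over $S$. As written, the proposal is a plausible outline of \cite{JS 98} with the decisive step left open.
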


\subsection{Picture deformations and the associated Milnor
  fibers.}\label{pict}  
Consider a decorated germ $(C,l)$ and one of its picture 
deformations $(C_S, l_S)$. Fix a closed Milnor ball $B$ for the 
germ $(C,0)$. For $s\neq 0$ sufficiently small, $C_s$ will have a 
representative in $B$, denoted by $D$, which meets $\partial B$ 
transversally. As explained in  subsection \ref{delta}, 
it is a union of \emph{immersed discs} 
$\{D_i\}_{1\leq i\leq r}$ canonically oriented by their complex 
structures (and whose set of indices correspond canonically to 
those of  $\{C_i\}_{1\leq i\leq r} $). The singularities of $D$ 
consist of ordinary $m$-tuple points, for various $m\in \N^*$. 
 
Denote by $\{P_j\}_{1 \leq j \leq n}$ the set of images in $B$ of the 
points in the support of $l_s$. It is a finite set of points which 
contains the singular set of $D$ (because $m_s \subset l_s$ for $s
\neq 0$), but 
it might contain some other `free' points as well. There is a priori no 
preferred choice of their ordering. Hence, the matrix introduced 
next is well-defined only \emph{up to permutations of columns}. By
contrast, the ordering
of lines is determined by the fixed order of the components of $C$. 
 
\begin{definition} \cite[page 483]{JS 98} \label{incid} 
  The {\it incidence matrix} of a picture deformation $(C_S, l_S)$ 
  is the matrix $\mathcal{I}(C_S, l_S)\in \mbox{Mat}_{r,n}(\Z)$ 
(with $r$ rows and $n$ columns) whose 
  entry $m_{ij}\in \N$ at the  intersection 
  of the $i$-th row and the $j$-th column is equal to the multiplicity
  of $P_j$ as a point of $D_i$. 
\end{definition} 

Such a matrix satisfies the following necessary conditions:
\begin{equation} \label{delt} 
  \sum_{j=1}^{n}\frac{m_{ij}(m_{ij}-1)}{2} =  \delta_i, 
    \mbox{ for all } i \in \{1,...,r\}, 
\end{equation} 
where $\delta_i$ denotes the $\delta$-invariant of the branch $C_i$;
\begin{equation} \label{inters} 
   \sum_{j=1}^{n} m_{ij} m_{kj} = C_i \cdot C_k,  \mbox{ for all } i,
   k    \in \{1,...,r\}, i \neq k, 
\end{equation} 
where $C_i\cdot C_k$ stands for the intersection multiplicity at $0$
of the branches $C_i$ and $C_k$;
\begin{equation} \label{nopoints} 
  \sum_{j=1}^{n}m_{ij} =  l_i, \mbox{ for all } i \in \{1,...,r\}. 
\end{equation} 

The previous equations are consequences of Definition 
\ref{defcurve} (ii) and of the fact that $C_{s\neq 0}$ has only
ordinary singularities.

It is an open problem to determine, in general, the set of incidence matrices
associated with a given decorated germ, among the matrices which satisfy
simultaneously (\ref{delt}), (\ref{inters}) and (\ref{nopoints}). It
is also an open problem to 
determine the union of the sets of incidence matrices associated with
all decorated germs with a given topology. 

The Milnor fiber of a smoothing associated to a picture deformation of
a standard decorated germ $(C,l)$ is recovered as follows. Let: 
\begin{equation} \label{blowpoints} 
   (\tilde{B}, \tilde{D}) 
   \stackrel{\beta}{\longrightarrow}(B, D) 
\end{equation} 
  be the simultaneous blow-up of  the points $P_j$ of $D$. Here 
  $\tilde{D}:= \cup_{1\leq i \leq r}\tilde{D}_i$, where $\tilde{D_i}$ 
  is the strict transform of the disc $D_i$ by the modification
  $\beta$.  
Consider pairwise disjoint compact tubular neighborhoods of the discs
$\tilde{D}_i$  in $\tilde{B}$ and denote their interiors by $T_i$. 
 
\begin{proposition}\cite[(5.1)]{JS 98} \label{recofiber} 
  Let $(C,l)$ be a standard decorated germ. Then 
  the Milnor fiber of the smoothing of $X(C,l)$ corresponding to the 
  picture deformation $(C_S, l_S)$ is orientation-preserving
  diffeomorphic to the  
  compact oriented manifold with boundary $W:= \tilde{B}\setminus 
         (\bigcup_{1\leq i \leq r} T_i)$ (whose corners are smoothed). 
\end{proposition}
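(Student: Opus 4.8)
The plan is to realize the smoothing explicitly as the generic fiber of a geometric family built by performing, in a family over the base $S$ of the picture deformation, the blow-up--contraction construction that defines $X(C,l)$, and then to identify that generic fiber with $W$ by direct topological bookkeeping. Throughout I would lean on the theorem of de Jong and van Straten \cite[(4.4)]{JS 98}, which guarantees that any such family represents the smoothing component attached to $(C_S,l_S)$, so that its generic fiber is, up to orientation-preserving diffeomorphism, exactly the Milnor fiber in question.

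First I would carry the defining construction of $X(C,l)$ (subsection \ref{decor}) through the deformation. Recall that $X(C,l)$ is produced from the Milnor ball $B$ by blowing up the infinitely near points prescribed by $l$ and then contracting the connected exceptional configuration $E(C,l)$ that is disjoint from the strict transform of $C$. By the simultaneous normalization characterization of $\delta$-constant deformations (subsection \ref{delta}), the normalization of the total space is $\tilde C\times S$, so the relative decoration $l_S$ makes sense fiberwise; I would blow up the total family along the scheme realizing $l_S$ and then contract the relative analogue of $E(C,l)$. After checking flatness, the special fiber is $X(C,l)$ by construction, so this family is a deformation of $X(C,l)$ and, by the cited theorem, the smoothing associated with $(C_S,l_S)$.

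Next I would analyse the generic fiber. For $s\neq 0$ the divisor $l_s$ is reduced with support the \emph{distinct} points $P_j$, all lying on $D$; hence the iterated tower of infinitely near blow-ups degenerates to the single simultaneous blow-up $\beta:\tilde B\to B$ at the $P_j$. Because the points have separated, every exceptional sphere of $\beta$ meets the strict transform $\tilde D=\bigcup_i\tilde D_i$, so there is \emph{no} subconfiguration disjoint from $\tilde D$: the relative analogue of $E(C,l)$ is empty on the generic fiber and the contraction is an isomorphism, so the generic fiber of the surface family is smooth and equals $\tilde B$, as it must be. The branches, which in the special fiber are the curvettas carved out by the strict transform of $C$ and which are \emph{not} part of the germ $X(C,l)$, deform to the disjoint discs $\tilde D_i$; passing from the ambient surface to the germ of the singular point amounts, fiberwise, to deleting regular neighborhoods of these curvettas. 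This identifies the generic fiber with $\tilde B\setminus\bigcup_i T_i=W$ after smoothing corners. Since $\tilde B$ carries its complex orientation, which agrees with the one on the Milnor fiber, the resulting diffeomorphism is orientation-preserving; and since $\beta$ is an isomorphism near $\partial B$, the boundary $\partial W$ is the plumbed $3$-manifold determined by the sandwiched graph (Proposition \ref{charsand}), consistently with the canonical identification of $\partial(X,x)$ recorded in subsection \ref{defsm}.

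The step I expect to be the main obstacle is the construction of paragraph two: producing the relative blow-up and contraction so that it is genuinely flat over $S$ with $X(C,l)$ as special fiber, and then proving that for $s\neq 0$ the contraction degenerates to an isomorphism. The difficulty is that the sandwiched construction is defined through an $s$-dependent tower of infinitely near points which collides into a single tower as $s\to 0$; tracking this tower uniformly across the whole family, and verifying that the connected negative-definite configuration $E(C,l)$ really spreads out into curves each meeting $\tilde D$ for $s\neq 0$, is where the geometric content sits. Once this is in place, identifying the smooth generic fiber with $W$ and matching orientations and the boundary is the routine topological bookkeeping sketched above.
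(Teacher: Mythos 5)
First, a point of comparison: the paper does not prove this proposition at all --- it is imported verbatim from de Jong and van Straten \cite[(5.1)]{JS 98}, and the only original contribution here is the remark immediately following it, namely that the hypothesis ``standard decorated germ'' (i.e.\ $l_i\geq M(i)+1$) is genuinely needed and was omitted in the original reference. So there is no in-paper proof to match your sketch against; what can be assessed is whether your sketch would stand on its own.

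It would not, and the most telling symptom is that your argument never invokes the standardness hypothesis anywhere: every step you describe would go through verbatim for an arbitrary decorated germ with $l_i\geq m(i)$, yet the paper states explicitly that the proposition is \emph{false} in that generality (take two branches whose strict transforms are curvettas of the same $(-1)$-curve). The gap sits exactly where you wave your hands: the sentence ``passing from the ambient surface to the germ of the singular point amounts, fiberwise, to deleting regular neighborhoods of these curvettas.'' For the special fiber this identification of a Milnor representative of $X(C,l)$ with the complement of tubular neighborhoods of the strict transforms requires that each strict transform of $C_i$ be a curvetta of its \emph{own} $(-1)$-curve $E_i$ meeting $E(C,l)$ in exactly one component, so that removing its neighborhood does not disconnect or otherwise damage the neighborhood of $E(C,l)$ being contracted; this is precisely what $l_i\geq M(i)+1$ buys and what fails for smaller $l_i$. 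A secondary but real issue is your second paragraph: de Jong and van Straten do not construct the deformation of $X(C,l)$ by a relative blow-up-and-contraction of the curve family (they work with explicit equations attached to $(C_S,l_S)$), and the flatness of your proposed family, together with its agreement with the correspondence of \cite[(4.4)]{JS 98}, is not a ``main obstacle'' to be deferred --- it is essentially the entire content of the statement. As it stands the proposal is a plausible heuristic for why $W$ is the right answer, not a proof.
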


Moreover, one can show that in this realization, the canonical identification
of the boundaries of the Milnor fibers corresponding to different
smoothing components is  the unique identification  (up to isotopy) which
extends the identity morphism on the complement in $\partial \tilde{B}$ of some
tubular neighborhood of $\partial C$. Even more, by this presentation (for a
fixed  $(C,l)$) one can also see the canonical identification of all the
boundaries of the Milnor fibers with the boundary $\partial(X,x)$, 
cf. also Lemma \ref{handles} and the paragraphs following it.

We mention that although many results regarding decorated curves are valid for
any $l_i\geq m(i)$, in Proposition \ref{recofiber} one needs  $l_i\geq
M(i)+1$, i.e.  
standard decorated curves (a fact omitted  in \cite[(5.1)]{JS 98}).
Without this hypothesis, Proposition \ref{recofiber} 
is false, as one may see by considering more than one
curvetta attached to a $(-1)$-curve.

\medskip 
\section{From a Milnor fiber to the incidence matrix} 
\label{ljs}

In this section we prove our main results, Theorem \ref{distinc} and Theorem
\ref{lowbound}. The main idea of the proofs, inspired by the article 
\cite{L 08} of Lisca, is to close all the Milnor fibers with the same
`cap' (a 4-manifold with boundary) which is  glued each time
in the `same way' (this make sense thanks to the canonical
identifications up to isotopy of the 
boundaries of the Milnor fibers). 
The cap we use emerges naturally from the view-point
of de Jong and van Straten. 

All over the section, we suppose that $(C,l)$ is a \emph{standard} decorated
germ. We keep the notations of section \ref{topsmooth}.

\subsection{Markings of oriented sandwiched 3-manifolds.} 
   \label{mark}

In this subsection we reinterpret the ambiguity of the choice of the embedding
of the fixed  sandwiched graph into a smooth one as a \emph{marking}.

\begin{definition} \label{sanfold}
   An oriented 3-manifold is called a \emph{sandwiched
     manifold} if it is orien\-tation-preserving diffeomorphic to the
   oriented boundary $\partial(X,x)$ of a sandwiched surface
   singularity $(X,x)$. Any such singularity is called a
   \emph{defining singularity} of $M$. 
\end{definition}

By Neumann's theorem recalled in subsection \ref{toponorm}, sandwiched
3-manifolds determine the sandwiched 
graph associated with any defining singularity. By Theorem \ref{invar},
they may also be endowed canonically, up to an isotopy, with a plumbing
structure $\mathcal{T}_{min}$ corresponding to the minimal resolution
of a defining singularity. We say that the connected components of the
complement in $M$ of the tori of $\mathcal{T}_{min}$ are the
\emph{pieces} of $M$.

Suppose now that $(X,x)=X(C,l)$, where $(C,l)$ is a standard
decorated germ with $(C,0)\subset (\C^2,0)$. 
Let $(\Sigma, E(\pi)) \stackrel{\pi}{\rightarrow}
(\C^2,0)$ be the composition of blow-ups of points infinitely near $0$
determined by $(C,l)$. One may write:
  $$E(\pi)= E(C,l) + \sum_{i=1}^r E_i,$$
where $E(C,l)$ is the exceptional divisor of the minimal resolution of 
$(X,x)$ and $(E_i)_{1 \leq i \leq r}$ are all the $(-1)$-curves contained in
the exceptional divisor $E(\pi)$, numbered such that $E_i$ is the
unique irreducible component of $E(\pi)$ which intersects the strict
transform of $C_i$. 
Denote by $F_i$ the unique irreducible component
of  $E(C,l)$ which intersects $E_i$. To $F_i$ corresponds a
well-defined piece of $M$. In this way, one gets a map from the set
$\{1,...,r\}$ to the set of pieces of $M$.

\begin{definition}
   A map from $\{1,...,r\}$ to the set of pieces of $M$ obtained as
   above is called \emph{a marking} of $M$. 
\end{definition}

We see that each choice of defining singularity of $M$ of the form
$X(C,l)$, where $(C,l)$ is a standard decorated germ, determines a
well-defined marking of $M$.

\subsection{Closing the boundary of the Milnor fiber.}\label{closemiln} 

  Let $(C_S, l_S)$ be a picture
deformation of the decorated germ $(C,l)$. 
 
As the disc-configuration $D$ is obtained by deforming $C$, 
 its  boundary $\partial D:= \linebreak \cup_{1\leq i \leq r}D_i 
  \hookrightarrow \partial B$ is isotopic as an 
  oriented link to $\partial C\hookrightarrow \partial B$. Therefore, 
  we can isotope $D$ 
  outside a compact ball containing all the points $P_j$ 
  till its boundary  coincides with the boundary of $C$. From now on,
  $D$ will denote the result of this isotopy. 
  Let $(B', C')$ be a second copy of $(B,C)$, and define (see Figure
  \ref{fig:Glue}): 
  $$(V,\Sigma):=(B,D)\cup_{id}(\overline{B}', \overline{C}').$$

\begin{figure}[h!] 
\labellist \small\hair 2pt 
\pinlabel{$B$} at 52 232
\pinlabel{$\overline{B}'$} at 230 232
\pinlabel{$D$} at 60 84
\pinlabel{$\overline{C}'$} at 252 84
\pinlabel{$\partial B= \partial \overline{B}'$} at 285 290
\endlabellist 
\centering 
\includegraphics[scale=0.55]{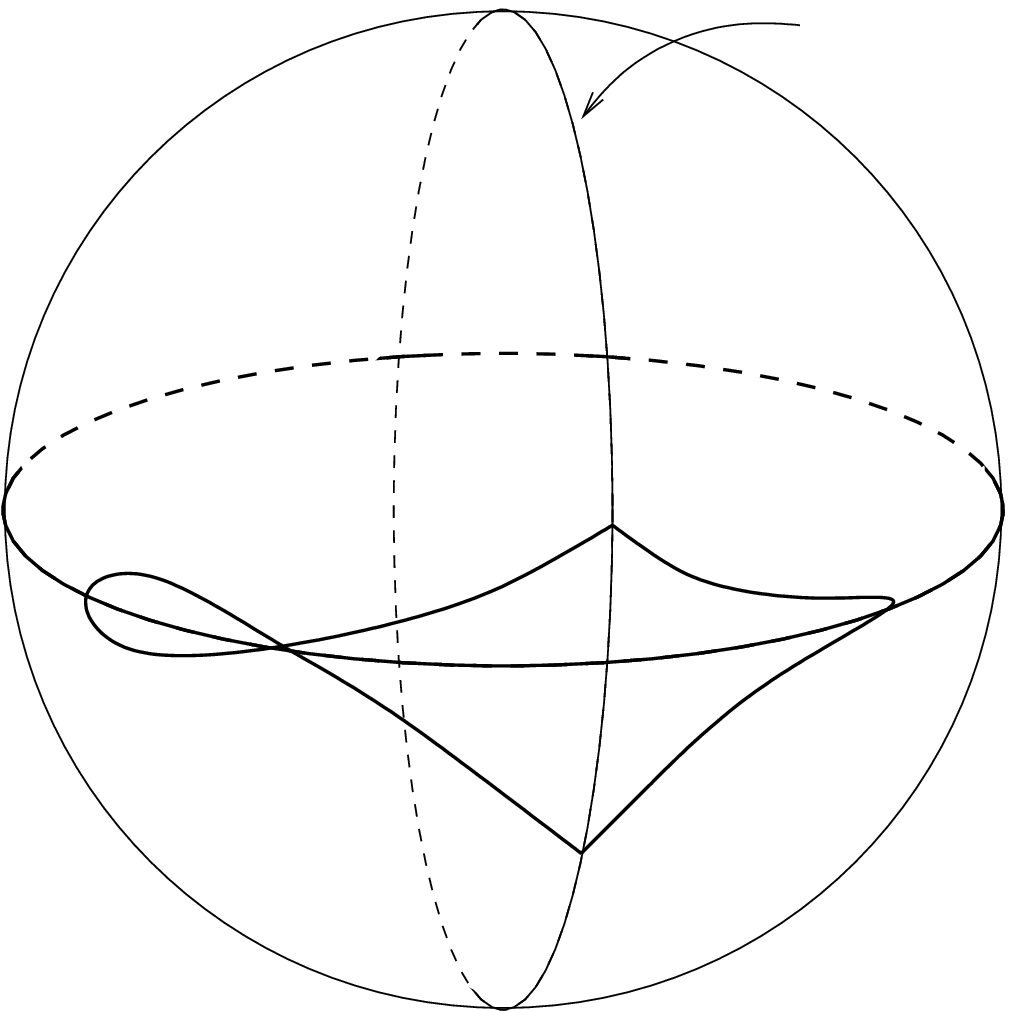} 
\vspace{2mm} \caption{Gluing a curve and one of its picture deformations} 
\label{fig:Glue} 
\end{figure}

Here $V$ is 
  the oriented 4-sphere obtained by gluing the boundaries 
  of $B$ and $\overline{B}'$ by the tautological identification, and 
  $\Sigma:= \cup_{i=1}^r\Sigma_i$, where 
   $\Sigma_i$ is obtained by gluing 
  $D_i$ (perturbed by the above isotopy) and $\overline{C}_i'$ 
  along their common boundaries. Recall that   $\overline{C}_i'$ 
is a topological disc, while $D_i$ an immersed disc. 
Moreover,  one can also glue  $(\overline{B}', \overline{C}')$ 
with $(\tilde{B},\tilde{D})$ in such a way that the blow-up morphism 
$\beta$ of  (\ref{blowpoints}) may be extended by the identity on 
$\overline{B}'$, yielding:  
$$ (\tilde{V}, \tilde{\Sigma})\stackrel{\beta}{\longrightarrow} (V, 
\Sigma).$$ Here $\tilde{\Sigma}:= \cup_{i=1}^r \tilde{\Sigma}_i$, 
where $\tilde{\Sigma}_i$ denotes the strict transform of
$\Sigma_i$, i.e. $\tilde{\Sigma}_i = \tilde{D}_i \cup 
\overline{C}'_i$. In particular, 
 $\tilde{\Sigma}_i$ is a topologically {\em embedded} 2-sphere in
$\tilde{V}$, which is smoothly embedded in $\tilde{V} \setminus 0'$ (it
is smoothly embedded everywhere if and only if the branch $C_i$ is
smooth). 

 \begin{lemma} \label{intsp} 
     The intersection numbers of the oriented 
     spheres $(\tilde{\Sigma}_i)_{1\leq i \leq r}$ inside the oriented 
     $4$-manifold $\tilde{V}$ are the following: 
     $$\left\{ \begin{array}{ll} 
             \tilde{\Sigma}_i^2=-l_i -2 \delta_i & \mbox{for all} \ 
                    i \in \{1,\ldots,r\}, \\ 
             \tilde{\Sigma}_i \cdot \tilde{\Sigma}_j = - C_i \cdot C_j & 
             \mbox{for all}   \  i <j. 
           \end{array} \right.$$ 
   \end{lemma}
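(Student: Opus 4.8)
The plan is to reduce everything to a homological computation inside $H_2(\tilde V)$, which I first describe explicitly. Since $\overline B'$ is a ball, hence contractible, while $\tilde B$ is the blow-up of the ball $B$ at the $n$ points $P_j$, a Mayer--Vietoris argument along $\partial \tilde B = \partial B = \mathbb{S}^3$ shows that $H_2(\tilde V) \cong H_2(\tilde B) \cong \Z^n$, a basis being the classes $[\E_j]$ of the exceptional $\mathbb{P}^1$'s of the blow-up $\beta$ of (\ref{blowpoints}) over the points $P_j$. As the $\E_j$ are pairwise disjoint complex curves of self-intersection $-1$ in the complex surface $\tilde B$, whose complex orientation agrees with the one induced from $\tilde V$, the intersection form on $H_2(\tilde V)$ in this basis is $-\mathrm{Id}_n$; in particular it is unimodular.

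Next I determine the class $[\tilde\Sigma_i]$. The piece $\overline C'_i$ lies in $\overline B'$ and is therefore disjoint from every $\E_j$, so $[\tilde\Sigma_i]\cdot[\E_j] = [\tilde D_i]\cdot[\E_j]$. Because $(C_S,l_S)$ is a picture deformation, $D_i$ is an immersed disc whose local branches at $P_j$ are smooth and pairwise transverse, their number being exactly the multiplicity $m_{ij}$ of $P_j$ as a point of $D_i$ (Definition \ref{incid}). After blowing up, the strict transforms of these $m_{ij}$ smooth branches meet $\E_j$ transversally at $m_{ij}$ distinct points, each with local intersection $+1$ since everything is complex-analytic; hence $[\tilde\Sigma_i]\cdot[\E_j] = m_{ij}$. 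Using unimodularity of $-\mathrm{Id}_n$, this pins down the class: writing $[\tilde\Sigma_i] = \sum_j c_{ij}[\E_j]$ and intersecting with $[\E_k]$ gives $-c_{ik} = m_{ik}$, so
$$[\tilde\Sigma_i] = -\sum_{j=1}^n m_{ij}\,[\E_j].$$

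From this formula the two asserted identities follow by direct expansion. For $i \neq k$,
$$\tilde\Sigma_i \cdot \tilde\Sigma_k = \sum_{j,l} m_{ij}m_{kl}\,[\E_j]\cdot[\E_l] = -\sum_{j=1}^n m_{ij}m_{kj} = -\,C_i\cdot C_k,$$
the last equality being equation (\ref{inters}). For the self-intersection,
$$\tilde\Sigma_i^2 = -\sum_{j=1}^n m_{ij}^2 = -\sum_{j=1}^n m_{ij}(m_{ij}-1) - \sum_{j=1}^n m_{ij} = -2\delta_i - l_i,$$
where I have used equations (\ref{delt}) and (\ref{nopoints}) to evaluate the two sums, giving exactly $-l_i - 2\delta_i$.

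The routine part is this final algebra; the step requiring genuine care is the identification $[\tilde\Sigma_i]\cdot[\E_j]=m_{ij}$, where one must match the combinatorial entry $m_{ij}$ of the incidence matrix with the geometric local intersection multiplicity of the immersed disc $D_i$ against the exceptional curve, and keep the orientation conventions consistent so that $[\E_j]^2 = -1$ and $[\tilde D_i]\cdot[\E_j] = +m_{ij}$ carry the correct signs relative to the orientation of $\tilde V$ fixed by the gluing of $B$ with $\overline B'$. I therefore expect this geometric and orientation bookkeeping to be the main obstacle, the homological computation being straightforward once it is in place.
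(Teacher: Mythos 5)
Your proof is correct, but it takes a genuinely different route from the paper's. You work entirely homologically: you identify $H_2(\tilde V)\cong\Z^n$ with the basis of exceptional classes, compute $[\tilde\Sigma_i]\cdot[\E_j]=m_{ij}$ geometrically, use unimodularity to pin down the class $[\tilde\Sigma_i]=-\sum_j m_{ij}[\E_j]$, and then expand. The paper instead argues geometrically in two separate steps: for the self-intersection it starts from $\Sigma_i^2=0$ in the $4$-sphere $V$ and applies the blow-up formula (self-intersection drops by $m_{ij}^2$ at each blown-up point of multiplicity $m_{ij}$), exactly matching your diagonal computation via $\sum_j m_{ij}^2=l_i+2\delta_i$; for the off-diagonal terms it observes that $\tilde\Sigma_i$ and $\tilde\Sigma_j$ meet only at $0'$, where the orientation of $\overline{B}'$ is reversed, giving $-C_i\cdot C_j$ directly. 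The notable difference is that your off-diagonal computation relies on equation (\ref{inters}), whereas the paper's does not --- indeed, juxtaposing the two arguments essentially yields a proof of (\ref{inters}). What your approach buys is the explicit class formula $[\tilde\Sigma_i]=-\sum_j m_{ij}[\sigma_j]$, which is precisely the ingredient the paper needs later (in the lemma of subsection \ref{mainthm} and in Proposition \ref{eqmat}) to reconstruct the incidence matrix from the Milnor fiber; so your argument unifies this lemma with that later material, at the cost of a slightly longer setup and the extra dependence on (\ref{inters}). Your orientation bookkeeping (complex orientation on $\tilde B$, hence $[\E_j]^2=-1$ and positive local intersections with $\tilde D_i$) is consistent with the paper's conventions.
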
 
 
\begin{proof} Fix $i \in \{1,...,r\}$ and, as in Definition
  \ref{incid},  denote by $m_{ij}$ the
  multiplicity of $P_j$ as a point of the curve $D_i$. By combining
  the equations (\ref{delt}) and (\ref{nopoints}), we get: 

  \begin{equation} \label{sumsquare}
     \sum_{j=1}^n m_{ij}^2= l_i + 2 \delta_i.
  \end{equation}

  By blowing-up a point of multiplicity $m \in \N^*$ of a compact
  complex curve on a smooth complex surface, its self-intersection
  drops by $m^2$. This is also true if one looks simply at a $2$-cycle
  on an oriented 4-manifold which becomes holomorphic in
  an adequate chart containing the point to be blown-up. Applying this
  last fact to $\Sigma_i \hookrightarrow V$, which is holomorphic as
  seen inside a neighborhood of the points $P_j\in B\hookrightarrow
  \C^2$, we get: 
   $$\tilde{\Sigma}_i^2= \Sigma_i^2 -\sum_{j=1}^n m_{ij}^2
   \stackrel{(\ref{sumsquare})}{=} -l_i -2 \delta_i,$$
since $\Sigma_i^2=0$ by the fact that $H_2(V)=0$.
  The first family of formulae is proved. 

  The second family of formulae results simply from the fact that
  $\tilde{\Sigma}_i$ and $\tilde{\Sigma}_j$ intersect only at $0'$, and
  that $\overline{B}'$ is a copy of $B$ with changed orientation.  
  \end{proof}

Write $T:=\bigcup_{1\leq i \leq r} T_i$ and set 
also:
\begin{equation} \label{defU} 
  U:= \overline{B}'\cup T. 
\end{equation}

Since $W=\tilde{B}\setminus T$ (cf. \ref{recofiber}), the closed
oriented 4-manifold $\tilde{V}$ 
is obtained by closing the boundary of $W$ by the cap $U$. 
This cap is independent of the chosen picture deformation:

\begin{lemma} \label{handles} $U$ depends only on $(C,l)$ and
is independent of the chosen picture 
deformation (therefore one may close all the different Milnor 
fibers using the same $U$). In fact, each $T_i$ in $U$ is a 
$4$-dimensional handle of index $2$ glued to 
    $\overline{B}'$ along the knot $\partial C_i\hookrightarrow 
    \partial \overline{B}'$ endowed with the $(-l_i-2 \delta_i)$-framing. 
  \end{lemma}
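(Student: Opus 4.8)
The plan is to exhibit $U=\overline{B}'\cup T$ as the result of attaching $r$ two-handles to the $0$-handle $\overline{B}'$, one along each component $\partial C_i$ of the link $\partial C\hookrightarrow\partial\overline{B}'$, and then to read off the attaching framings directly from Lemma \ref{intsp}.

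First I would fix the handle structure. After the isotopy making $\partial D=\partial C$, the strict transform $\tilde{D}_i$ is a \emph{smoothly} embedded disc in $\tilde{B}$ (the only possible singular point $0'$ lies on the $\overline{C}'_i$ side), whose boundary is $\partial C_i\subset\partial\tilde{B}=\partial\overline{B}'$. Hence a compact tubular neighborhood $T_i$ of $\tilde{D}_i$ is diffeomorphic to $D^2\times D^2$, with core $\tilde{D}_i=D^2\times 0$, and its intersection with the separating sphere $\partial\tilde{B}=\partial\overline{B}'$ is a tubular neighbourhood $S^1\times D^2$ of $\partial C_i$, which is exactly the attaching region. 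Reading $U=\overline{B}'\cup(\bigcup_i T_i)$ with $\overline{B}'$ as $0$-handle, each $T_i$ is therefore a $4$-dimensional handle of index $2$ glued to $\overline{B}'$ along $\partial C_i$. Since the $T_i$ are pairwise disjoint and the branches $\partial C_i$ are disjoint in $\partial\overline{B}'$, this presents $U$ as $\overline{B}'$ with $r$ two-handles attached along the components of the framed link $\partial C$.

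The heart of the argument is the computation of the framing of $T_i$. I would take as reference framing of $\partial C_i$ the one induced by the disc $\overline{C}'_i\subset\overline{B}'$, a framing determined by $(C,0)$ alone. Capping the core $\tilde{D}_i$ of the handle by $\overline{C}'_i$ produces precisely the sphere $\tilde{\Sigma}_i=\tilde{D}_i\cup\overline{C}'_i$ of Lemma \ref{intsp}. By the standard fact that the self-intersection of a sphere obtained from the core of a two-handle together with a capping disc in the $0$-handle equals the attaching framing measured against that disc, the framing of $T_i$ is $\tilde{\Sigma}_i^2=-l_i-2\delta_i$. Thus each handle $T_i$ is attached along $\partial C_i$ with the $(-l_i-2\delta_i)$-framing, as asserted. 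Independence of the picture deformation then follows at once: the attaching link $\bigcup_i\partial C_i$ is the link of $C$ and so depends only on $(C,0)$, the reference framings depend only on $(C,0)$, and the coefficients $-l_i-2\delta_i$ depend only on the decoration $l$ and the $\delta$-invariants of the branches, hence only on $(C,l)$. Therefore the framed link, and with it the $4$-manifold $U$, is the same for every picture deformation of $(C,l)$.

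The step I expect to be the main obstacle is pinning down the framing in the third paragraph: one must verify that $\overline{C}'_i$, which is singular at $0'$ when $C_i$ is a singular branch, genuinely furnishes the reference framing against which $\tilde{\Sigma}_i^2$ measures the handle framing, and that the orientation reversal on $\overline{B}'$ does not introduce a spurious sign. In other words, the delicate point is to confirm that the self-intersection computed in Lemma \ref{intsp} is \emph{exactly} the attaching framing, rather than differing from it by the framing defect of the singular disc $\overline{C}'_i$ relative to the $0$-framing; once this is settled, everything else is formal.
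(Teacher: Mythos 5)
Your proof is correct and follows essentially the same route as the paper's: identify each $T_i$ as a $2$-handle with core $\tilde{D}_i$, cap the core with $\overline{C}'_i$ to obtain $\tilde{\Sigma}_i$, and read the framing off the first family of formulae in Lemma \ref{intsp}. The worry you flag at the end is not an issue: since $H_2(\overline{B}')=0$, any $2$-chain in $\overline{B}'$ with boundary $\partial C_i$ (in particular the singular disc $\overline{C}'_i$) yields the same self-intersection for the capped-off core, and that number is by definition the attaching framing measured against the Seifert framing, which is exactly how the paper phrases it.
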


\begin{proof} The chosen framing index is equal to the self-intersection
  number of a surface obtained by gluing the core of the handle to a
  2-chain contained in the 4-ball $\overline{B}'$ and whose boundary
  is $\partial C_i$. By taking $\tilde{D}_i$ as core disc and
  $\overline{C}_i'$ as 2-chain, we get the sphere
  $\tilde{\Sigma}_i$. We apply then the first family of formulae
  stated in Lemma \ref{intsp}. 
  \end{proof}

 Moreover, the cap $U$ is always glued in the same way, up to isotopy,
 to the boundaries of the Milnor fibers corresponding to the various picture
 deformations, identified by the natural diffeomorphisms. To see this,
 note first that $T_i$ may be seen as a  2-handle attached to
 $\overline{B}'$, with core disc $\tilde{D}_i$. Take a co-core
 $K_i$. It is a core of $T_i$ seen as a 2-handle attached to $W$. In
 order to describe the attaching of $U$ to $W$, it is enough to
 describe the link $\cup_{i=1}^r \partial K_i$ in $W$ and its
 associated framing up to isotopy. 
 
In the following lemma, we also use the notations of subsection
\ref{mark}. 

\begin{lemma} \label{attach}
  Let $(C,l)$ be a  standard decorated germ. 
  Consider on $\partial W$ the plumbing structure corresponding to the
  minimal resolution of $X(C,l)$. Consider then an ordered collection
  of $r$ regular fibers $f_1,...,f_r$, where $f_i$ is
the generic fiber of the piece of $W$ marked by $i$. 
Each knot $f_i$ is endowed with the framing coming from
  the plumbing structure, increased by $1$.
 Then one obtains a framed
  link isotopic to   the attaching framing of $T$. 
\end{lemma}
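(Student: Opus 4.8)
The plan is to identify the co-core knot $\partial K_i$ explicitly inside $\partial W$ and compute its framing. Recall that $T_i$ is a $2$-handle attached to $\overline{B}'$ along $\partial C_i$ with the $(-l_i-2\delta_i)$-framing (Lemma \ref{handles}), with core disc $\tilde{D}_i$; its co-core $K_i$ is then a $2$-handle attached to $W=\tilde{B}\setminus T$, and $\partial K_i$ is a meridian-type circle linking $\tilde{D}_i$ once. The essential geometric point is that $\partial W$ is the boundary of a tubular neighborhood of the disc $\tilde{D}_i$, so $\partial K_i$ is precisely the boundary of a normal fiber of $\tilde{D}_i$ sitting over a point near $\partial\tilde{D}_i$. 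I would first argue that this normal circle, viewed in the plumbed boundary $\partial W$, is isotopic to a regular fiber $f_i$ of the correct piece.

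\textbf{Locating the piece.} First I would trace through the construction of subsection \ref{mark}. The disc $\tilde{D}_i$ is the strict transform of $D_i$, which deforms $C_i$; in the resolution picture $\pi$, the strict transform of $C_i$ meets exactly the $(-1)$-curve $E_i$, and $E_i$ in turn meets the component $F_i$ of $E(C,l)$ defining the marked piece. The claim is that the normal circle to $\tilde{D}_i$, pushed into $\partial W$, lands in the piece of $M$ corresponding to $F_i$. This is the heart of why the marking appears: the attaching knot of the co-core records, via the plumbing graph, which vertex the deformed branch is attached to. I would make this precise by comparing the tubular neighborhood $T_i$ of $\tilde{D}_i$ with the plumbing decomposition near $F_i$, so that $\partial K_i$ becomes the $S^1$-fiber of the circle fibration over the piece associated to $F_i$, i.e.\ a regular fiber $f_i$ up to isotopy.

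\textbf{Computing the framing.} For the framing, I would compare two framings of the same knot $f_i\subset\partial W$: the framing inherited as the attaching framing of the co-core $K_i$, and the framing coming from the plumbing structure. The attaching framing of $T_i$ as a handle on $\overline{B}'$ is $-l_i-2\delta_i$ (Lemma \ref{handles}); dualizing a $2$-handle exchanges the roles so that the attaching framing of the co-core is read off from the self-intersection of $\tilde{D}_i$ in $\tilde{B}$ together with the plumbing data. The ``$+1$'' is the key numerical discrepancy I would need to extract: it should arise precisely because $\tilde D_i$ is the strict transform obtained after blowing up along $C_i$ and then separating $E_i$ (the $(-1)$-curve) from $F_i$. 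I expect the bookkeeping to reduce to a local framing comparison near the single intersection point of the strict transform of $C_i$ with $E_i$, contributing the self-intersection $-1$ of $E_i$ and hence the shift by $+1$ relative to the plumbing framing of the fiber over $F_i$.

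\textbf{Main obstacle.} The main difficulty I anticipate is the framing normalization, not the identification of the knot: one must carefully reconcile the sign and orientation conventions (boundary orientations, the orientation reversal on $\overline{B}'$, the plumbing framing conventions of subsection \ref{toponorm}) so that the net correction is exactly $+1$ and not some other integer depending on $l_i$ or $\delta_i$. I would handle this by doing the comparison $2$-handle by $2$-handle in the Kirby-calculus picture attached to the minimal-resolution plumbing graph, tracking how passing from the $(-l_i-2\delta_i)$-framed attaching circle of $T_i$ on $\overline{B}'$ to the co-core framing on $W$ interacts with blowing down the $(-1)$-curve $E_i$; the self-intersections $-l_i-2\delta_i$ should cancel against the contributions absorbed into the plumbing framing of $f_i$, leaving the single residual $+1$.
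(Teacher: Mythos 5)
There is a genuine gap at the step you call ``locating the piece,'' and it propagates into the framing computation. You propose to work inside $\tilde{B}$ (the blow-up of the \emph{deformed} picture) and to compare the tubular neighborhood $T_i$ of $\tilde{D}_i$ with ``the plumbing decomposition near $F_i$,'' then to do a local framing comparison ``near the single intersection point of the strict transform of $C_i$ with $E_i$.'' But the curves $E_i$ and $F_i$ live in the resolution of $X(C,l)$, not in $\tilde{B}$: the blow-up $\tilde{B}\to B$ is performed at the points $P_j$ of the deformed curve $D$ and produces no exceptional configuration resembling $E(C,l)+\sum E_i$. The identification of $\partial W$ with $\partial(X(C,l))$, and hence the minimal-resolution plumbing structure on $\partial W$, is not visible from the $\tilde{B}$ side at all; it only becomes visible from the cap. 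So the space in which both the co-core circles $\partial K_i$ and the exceptional curves $E_i$, $F_i$ coexist --- the space where your comparison would have to take place --- is never constructed in your plan. This is precisely what the paper's proof supplies: it works in $\overline{U}=B'\cup\overline{T}$, blows up above $0'$ according to the undeformed copy $(C',l')$ to get $\overline{U}_\alpha\to\overline{U}$, and recognizes $\overline{U}_\alpha$ as a plumbing whose graph is the dual graph of $E(\pi)$ with a $0$-vertex attached to each $(-1)$-vertex; the co-cores of the $\overline{T}_i$ then appear as curvettas of the $0$-spheres $\Sigma_i^\alpha$, and plumbing calculus delivers both the isotopy class of the link (regular fibers of the marked pieces) and the $+1$ shift.

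The second, related gap is that the ``cancellation'' you anticipate is the actual content of the numerical part of the proof, not a formality. What must be shown is $(\Sigma_i^\alpha)^2=0$, i.e.
\[
\sum_{k=0}^{I(i)} m_k(i)^2 + h_i \;=\; l_i+2\delta_i ,
\]
which follows from $l_i=m(i)+h_i$ together with $\delta_i=\sum_k \tfrac{m_k(i)(m_k(i)-1)}{2}$ (formula (\ref{deltgen})) and Lemma \ref{intsp}. Without this identity the strict transforms $\Sigma_i^\alpha$ are not $0$-spheres, the cap is not a plumbing of the required shape, and the residual framing shift need not be $+1$; attributing the $+1$ solely to blowing down $E_i$ is the right intuition but cannot be justified by a purely local computation at one intersection point, since the global self-intersection $-l_i-2\delta_i$ of Lemma \ref{handles} must be absorbed exactly. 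To repair the proposal you would need to (a) build $\overline{U}_\alpha$ (or an equivalent model) so that the minimal-resolution plumbing structure and the co-cores are simultaneously realized, and (b) carry out the displayed identity.
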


\begin{proof} Note that the orientation of $\partial W$ (as induced
  from $W$) coincides with the orientation induced from
  $\overline{U}$. In what follows, we will reason inside 
    $\overline{U}= B' \cup \overline{T}$. Consider on  $B'$
   the natural complex structure, as a copy of
   $B$. Then $C' \hookrightarrow \tilde{\overline{\Sigma}}_i$ is
   holomorphic with respect to this complex structure and we may
   consider the sequence of blow-ups above $0' \in B'$ dictated by the
   copy $(C', l')$ of $(C,l)$. Let $\overline{U}_{\alpha}
   \stackrel{\alpha}{\rightarrow} \overline{U}$ be this total blow-up
   morphism and $\Sigma_i^{\alpha}$ be   the strict transform of
   $\tilde{\overline{\Sigma}}_i$ by the morphism $\alpha$.  

  Denote by $(m_0(i),...,m_{I(i)}(i))$ the multiplicity sequence
  associated to $C_i \hookrightarrow C$. The total multiplicity of
  $C_i$ with respect to $C$ is given by $m(i) = \sum_{k=0}^{I(i)}
  m_k(i)$. One has $l_i = m(i) +h_i$, where $h_i \in \N$, which
  implies that:
  \begin{equation} \label{comp}
    (\Sigma^{\alpha})^2 = (\tilde{\overline{\Sigma}}_i)^2 -
    \sum_{k=0}^{I(i)} m_k(i)^2 - h_i.
  \end{equation}
  But $\delta_i = \sum_{k=0}^{I(i)} \frac{m_k(i)(m_k(i)-1)}{2}$ by
  formula (\ref{deltgen}), which
  implies that $\sum_{k=0}^{I(i)} m_k(i)^2 = 2 \delta_i
  +m(i)$. Therefore:
  $$ \sum_{k=0}^{I(i)} m_k(i)^2 + h_i = 2\delta_i +m(i) +h_i = 2
  \delta_i +l_i.$$
  From formula (\ref{comp}) and Lemma \ref{intsp}, we deduce that
  $(\Sigma^{\alpha})^2 =0$. 

  The total space $\overline{U}_{\alpha}$ appears then as a tubular
  neighborhood of a configuration of spheres whose dual graph is
  obtained from the dual graph of the exceptional divisor of $\alpha$
  by attaching a $(0)$-vertex to each $(-1)$-vertex.  The co-cores
  of the handles $\overline{T}_i$ appear as curvettas for
  $\Sigma_i^{\alpha}$. An easy application of plumbing calculus (see
  \cite{N 81}) finishes the proof. 
\end{proof}

 \subsection{The reconstruction of the incidence matrix from the
   Milnor  fiber} \label{mainthm}
 
Fix  a picture deformation  $(C_S, l_S)$ of $(C,l)$. 
 
 \begin{lemma}
   Up to permutations, there exists a unique basis
   $(e_j)_{1 \leq j \leq n}$ formed by   $(-1)$-classes  
   of $H_2(\tilde{V})$ such that the matrix:
     $$N(C_S, l_S):= ([\tilde{\Sigma}_i]\cdot e_j)_{1\leq i \leq r,
       1\leq j \leq n}$$ has only non-negative entries. 
 \end{lemma}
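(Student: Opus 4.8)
The plan is to understand the homology of $\tilde V$ explicitly. Recall $\tilde V$ is obtained from the $4$-sphere $V$ (so $H_2(V)=0$) by blowing up the $n$ points $P_j$ simultaneously via $\beta$. Each blow-up of a point of a $4$-manifold introduces one new $2$-sphere, the exceptional sphere over $P_j$, and each such sphere has self-intersection $-1$. My first step is therefore to set $e_j := [\mathcal{E}_j]$, the class of the exceptional sphere over $P_j$, and to observe that $\{e_j\}_{1\le j\le n}$ is a basis of $H_2(\tilde V)\cong \Z^n$ with intersection form $-\mathrm{Id}$, so each $e_j$ is indeed a $(-1)$-class with $e_j\cdot e_k=-\delta_{jk}$.

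Next I would compute the pairings $[\tilde\Sigma_i]\cdot e_j$ for this distinguished basis. The strict transform $\tilde\Sigma_i$ meets the exceptional sphere $\mathcal{E}_j$ transversally in exactly $m_{ij}$ points (the local branches of $D_i$ through $P_j$, which by the picture-deformation hypothesis are smooth and pairwise transverse at the ordinary $m$-tuple point $P_j$). Each such intersection is positive because everything is complex-oriented near $P_j$. Hence $[\tilde\Sigma_i]\cdot e_j=m_{ij}\ge 0$, so the matrix $N(C_S,l_S)$ for this basis is exactly the incidence matrix, which has non-negative entries by construction. This establishes existence.

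For uniqueness up to permutation, the key point is that $(-1)$-classes in a negative-definite diagonal lattice $(\Z^n,-\mathrm{Id})$ are severely constrained. If $v=\sum_j a_j e_j$ satisfies $v\cdot v=-1$, then $\sum_j a_j^2=1$, forcing $v=\pm e_{j_0}$ for a single index $j_0$. So the only $(-1)$-classes in $H_2(\tilde V)$ are $\pm e_1,\dots,\pm e_n$. Any basis of $(-1)$-classes must therefore be a signed permutation $(\varepsilon_j e_{\sigma(j)})$ of the distinguished basis. The non-negativity requirement $[\tilde\Sigma_i]\cdot(\varepsilon_j e_{\sigma(j)})=\varepsilon_j m_{i\sigma(j)}\ge 0$ for all $i$ then pins down the signs: provided each column of the incidence matrix is nonzero (every $P_j$ lies on at least one $D_i$, true since the $P_j$ include the singular locus of $D$ and any genuinely free point still carries $l_s$ so lies on some branch), some $m_{i\sigma(j)}>0$, forcing $\varepsilon_j=+1$. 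Thus every admissible basis is an unsigned permutation of $\{e_j\}$, which is precisely uniqueness up to permutation.

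The main obstacle I anticipate is the sign/nonvanishing bookkeeping in the uniqueness step: one must rule out $\varepsilon_j=-1$ for \emph{every} $j$, and this requires knowing that each column of the incidence matrix is nonzero, i.e.\ that no $P_j$ is disjoint from all the discs $D_i$. This follows because the $P_j$ are images of the support of $l_s$ on $\tilde C_s$, which maps into $D=\bigcup_i D_i$, so each $P_j$ lies on at least one $D_i$ with $m_{ij}\ge 1$; I would state this carefully rather than treat it as automatic. The geometric input that $[\tilde\Sigma_i]\cdot e_j=m_{ij}$ rests on the local picture at an ordinary multiple point and on positivity of complex intersections, both of which are clean, so the only real care needed is this column-nonvanishing point underlying the elimination of negative signs.
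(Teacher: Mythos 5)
Your proposal is correct and follows essentially the same route as the paper: take the exceptional spheres of the blow-up $\beta$ as the distinguished basis, note that the intersection form is diagonal $(-1)$'s so every $(-1)$-class is $\pm$ a basis element, identify $[\tilde\Sigma_i]\cdot e_j$ with $m_{ij}$, and rule out sign flips because each column of the incidence matrix is nonzero since every $P_j$ lies in the image of the support of $l_s$ and hence on some $D_i$. Your extra care on the column-nonvanishing point is exactly the (briefly stated) justification the paper gives.
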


 \begin{proof} Denote by $\sigma_j:= \beta^{-1}(P_j)$ the 
   2-spheres embedded in $\tilde{V}$ obtained by blowing-up the points
   $P_j$. We suppose them oriented by the complex structure existing
   on $\tilde{B}$. Their homology classes are $(-1)$-classes in 
   $H_2(\tilde{V})$. As
   $V$ is diffeomorphic to a 4-sphere, we deduce that $([\sigma_j])_{1 \leq
     j \leq n}$ is a basis of the free group
   $H_2(\tilde{V})$. Moreover, the spheres being disjoint, the
   intersection form on $H_2(V)$ is in canonical form (the opposite of
   a sum of squares) in this basis. This shows that any $(-1)$-class
   is equal to some $\pm[\sigma_j]$. 

   By construction, if we take the basis $([\sigma_j])_{1 \leq
     j \leq n}$, the matrix $([\tilde{\Sigma}_i]\cdot
   [\sigma_j])_{1\leq i \leq r, 
       1\leq j \leq n}$ has only non-negative entries (in fact,
     elements of $\{0, +1\}$). Suppose that we
     change the sign of one of the classes $[\sigma_j]$. As the corresponding
     column contains at least one non-zero entry (because the point
     $P_j$ lies on at least one of the curves $D_i$), we deduce that
     the new matrix has some negative entries in that column. This
     proves the lemma.  
 \end{proof}

 \begin{proposition} \label{eqmat}
Let  $\mathcal{I}(C_S, l_S)$ be the  incidence matrix of the picture
deformation $(C_S,l_S)$. Then 
   the matrices $\mathcal{I}(C_S, l_S)$ and $N(C_S, l_S)$ are equal, up to
    permutations of columns. 
 \end{proposition}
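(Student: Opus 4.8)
The plan is to show that the matrix $N(C_S,l_S)$ computed against the natural basis $([\sigma_j])_j$ of $H_2(\tilde V)$ literally coincides, column by column, with the incidence matrix $\mathcal I(C_S,l_S)$, and then invoke the uniqueness statement of the preceding lemma to conclude equality up to column permutation. First I would recall from the proof of that lemma that, in the basis $([\sigma_j])_j$, the matrix $([\tilde\Sigma_i]\cdot[\sigma_j])_{i,j}$ already has nonnegative entries; hence by the uniqueness (up to permutation and signs) of a $(-1)$-basis yielding a nonnegative matrix, it suffices to compute these intersection numbers $[\tilde\Sigma_i]\cdot[\sigma_j]$ explicitly and match them to the entries $m_{ij}$ of $\mathcal I(C_S,l_S)$.

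The key computation is local, near each point $P_j$. Recall $\sigma_j=\beta^{-1}(P_j)$ is the exceptional $\PP^1$ of the blow-up, and $\tilde\Sigma_i=\tilde D_i\cup\overline C_i'$ is the strict transform of $\Sigma_i$. The spheres $\sigma_j$ lie over $B\hookrightarrow\C^2$, where everything is holomorphic, so the intersection $[\tilde\Sigma_i]\cdot[\sigma_j]$ can be computed with complex multiplicities. The strict transform $\tilde\Sigma_i=\tilde D_i$ of the disc $D_i$ meets the exceptional curve $\sigma_j$ of the blow-up of $P_j$ in exactly $m(D_i,P_j)$ points (the multiplicity of $P_j$ on $D_i$), all with local intersection number $+1$ because $D_i$ has only ordinary singularities at $P_j$, so its local branches are smooth and transverse to generic directions and meet $\sigma_j$ transversally at distinct points. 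Thus $[\tilde\Sigma_i]\cdot[\sigma_j]=m(D_i,P_j)=m_{ij}$, which is exactly the entry of the incidence matrix in Definition \ref{incid}. I would make sure the orientations agree: both $\tilde\Sigma_i$ and $\sigma_j$ are oriented by the complex structure on $\tilde B$ (the part $\overline C_i'$ over $\overline B'$ is disjoint from all $\sigma_j$ since $\beta$ is the identity there), so all local contributions are genuinely $+1$ and the count is the honest complex intersection number.

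Having shown $([\tilde\Sigma_i]\cdot[\sigma_j])_{i,j}=\mathcal I(C_S,l_S)$, I would finish by remarking that $([\sigma_j])_j$ is a $(-1)$-basis producing a nonnegative matrix, so by the uniqueness in the previous lemma it agrees with the chosen basis $(e_j)_j$ up to permutation; therefore $N(C_S,l_S)$ and $\mathcal I(C_S,l_S)$ are equal up to permutation of columns.

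The main obstacle, and the point needing care, is the transversality and sign bookkeeping in the local intersection count: one must verify that at an ordinary $m_{ij}$-tuple contact of $D_i$ with the configuration at $P_j$, the strict transform indeed meets the exceptional sphere $\sigma_j$ in precisely $m_{ij}$ distinct transverse points each contributing $+1$, rather than in fewer points with higher local intersection numbers. This is guaranteed because $(C_S,l_S)$ is a \emph{picture} deformation, so the only singularities of $C_{s\neq0}$ are ordinary multiple points: the $m_{ij}$ local branches of $D_i$ through $P_j$ are smooth with distinct tangent directions, so after blowing up $P_j$ their strict transforms hit $\sigma_j$ at $m_{ij}$ distinct points, each transversally.
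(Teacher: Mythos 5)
Your proof is correct and follows essentially the same route as the paper: identify $N(C_S,l_S)$ with the matrix $([\tilde\Sigma_i]\cdot[\sigma_j])_{i,j}$ via the uniqueness part of the preceding lemma, then identify each entry $[\tilde\Sigma_i]\cdot[\sigma_j]$ with the multiplicity $m_{ij}$. The only difference is that you spell out the local transversality computation at the ordinary multiple points, which the paper asserts in one line; this is a welcome expansion rather than a deviation.
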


\begin{proof} The previous proof shows that, up to permutation of
  columns:
  $$N(C_S, l_S)= ([\tilde{\Sigma}_i]\cdot [\sigma_j])_{1\leq i \leq r,
       1\leq j \leq n}.$$
  But $[\tilde{\Sigma}_i]\cdot [\sigma_j]$ is equal to the incidence number
  between the curve $D_i$ and the point $P_j$, that is, to the
  $(i,j)$-entry $m_{ij}$ of the matrix $\mathcal{I}(C_S, l_S)$. The
  proposition is   proved. 
\end{proof}
 
The next theorem explains that one may reconstruct the incidence
matrix of a picture deformation of $(C,l)$ from the associated Milnor
fiber with marked boundary. 

\begin{theorem} \label{markinc}
  The incidence matrix $\mathcal{I}(C_S, l_S)$ associated to a picture
  deformation of a standard decorated germ $(C,l)$ is determined 
 (up to a permutations of its  columns) by
  the associated Milnor fiber and the marking of
  $\partial(X(C, l))$.  
\end{theorem}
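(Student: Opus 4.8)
The plan is to combine Proposition \ref{eqmat} with the intrinsic, deformation-independent description of the cap $U$ and of its attaching supplied by Lemmas \ref{handles} and \ref{attach}. By Proposition \ref{eqmat}, the incidence matrix $\mathcal{I}(C_S, l_S)$ coincides, up to permutation of columns, with the matrix $N(C_S, l_S) = ([\tilde{\Sigma}_i]\cdot e_j)$, where $(e_j)_{1\leq j\leq n}$ is the basis of $(-1)$-classes of $H_2(\tilde{V})$ rendering all entries non-negative, which is unique up to permutation by the Lemma preceding Proposition \ref{eqmat}. Hence it suffices to show that the lattice $(H_2(\tilde{V}),\cdot)$ together with the distinguished classes $[\tilde{\Sigma}_i]$, and therefore $N(C_S, l_S)$ itself, can be reconstructed from the Milnor fiber $W$ and the marking of $\partial(X(C,l))$.

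First I would reconstruct the closed $4$-manifold $\tilde{V}$ from $W$. Since $\tilde{V}=W\cup U$ and, by Lemma \ref{handles}, the cap $U$ depends only on $(C,l)$ and not on the chosen picture deformation, the only datum to pin down is the isotopy class of the attaching of $U$ to $\partial W$. Here I would invoke Theorem \ref{invar}, which endows $\partial W=\partial(X(C,l))$ with a canonical plumbing structure determined by the oriented boundary alone. Lemma \ref{attach} then describes the attaching framed link of $U$ explicitly as the collection of regular fibers $f_1,\dots,f_r$, where $f_i$ is the generic fiber of the piece marked by $i$, each framed by the plumbing framing increased by $1$. Because the marking tells us which piece of $W$ carries each index $i$, this framed link, and hence $\tilde{V}$ together with the embedded cap $U$, is determined up to diffeomorphism by the pair $(W,\text{marking})$.

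Next I would recover the classes $[\tilde{\Sigma}_i]$ and the intersection form. Each sphere $\tilde{\Sigma}_i=\tilde{D}_i\cup\overline{C}'_i$ lies entirely inside $U$: the core $\tilde{D}_i$ of the handle $T_i$ on the $U$-side, capped off by the $2$-chain $\overline{C}'_i\subset\overline{B}'$ bounding $\partial C_i$. Since the internal handle-and-ball structure of $U$ is fixed by $(C,l)$, these embedded spheres, and thus their homology classes, are canonically available once $U$ has been reconstructed; the self- and mutual intersection numbers are then those of Lemma \ref{intsp}, while $H_2(\tilde{V})$ carries the standard diagonal negative form coming from the blow-up structure. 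Feeding the resulting lattice-with-distinguished-classes into the Lemma preceding Proposition \ref{eqmat} yields the basis $(e_j)$ up to permutation, hence $N(C_S,l_S)$, which by Proposition \ref{eqmat} equals $\mathcal{I}(C_S,l_S)$ up to column permutation.

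The hard part will be verifying that this reconstruction of $\tilde{V}$ and of the marked spheres $[\tilde{\Sigma}_i]$ truly consumes only the data $(W,\text{marking})$, with no residual dependence on the specific picture deformation used to produce $W$. The essential inputs are precisely the deformation-independence of $U$ (Lemma \ref{handles}) and the purely plumbing-theoretic, marking-driven description of its attaching (Lemma \ref{attach}); combined with the naturality up to isotopy of the boundary identifications noted after Proposition \ref{recofiber}, these ensure that gluing the fixed cap to $W$ along the marked framed link always reproduces the same $\tilde{V}$-with-distinguished-spheres that underlies the definition of $N(C_S,l_S)$. Once this independence is established, the theorem is immediate.
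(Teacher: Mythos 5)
Your proposal is correct and follows essentially the same route as the paper: glue the deformation-independent cap $U$ (Lemma \ref{handles}) to $W$ along the marking-determined framed link of Lemma \ref{attach}, observe that the spheres $\tilde{\Sigma}_i$ live entirely in $U$, and conclude via the uniqueness of the non-negative basis of $(-1)$-classes and Proposition \ref{eqmat}. The paper's own proof is just a terser version of exactly this argument, and your explicit remark that each $\tilde{\Sigma}_i=\tilde{D}_i\cup\overline{C}'_i$ sits inside the cap is the right way to make the deformation-independence of the distinguished classes transparent.
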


\begin{proof}
  Consider the compact 4-manifold with boundary $U$, described in
  Lemma \ref{handles}. Attach it to the Milnor fiber $W(C_S, l_S)$ as
  described in Lemma \ref{attach}. This attaching is determined by the
  marking of $\partial(X(C, l))$. Then use Proposition \ref{eqmat}. 
\end{proof}

Consider now two decorated germs of plane curves. One says that they are
\emph{topologically equivalent} if one may choose Milnor
representatives of them inside euclidean balls centered at the origin
of $\mathbb{C}^2$ and homeomorphisms between the balls which restrict
to homeomorphisms between the representatives and which additionally
respect the numberings of their branches and the
decoration functions $l$. 

One may show easily that topologically equivalent
decorated germs give rise to topologically equivalent sandwiched
singularities with induced diffeomorphisms of their boundaries
preserving the associated markings. 

Look at the diffeomorphisms between the boundaries of
the two singularities which preserve the orientations and the
markings. One may consider those diffeomorphisms between fillings of their
boundaries which restrict to such special diffeomorphisms on the
boundaries. Then:

 \begin{theorem} \label{distinc}
    Consider two topologically equivalent standard decorated germs of
    plane curves, and for
    each one of them a picture deformation. If their incidence
    matrices are different up to permutation of columns, then their
    associated Milnor fibers are not diffeomorphic by an
    orientation-preserving  diffeomorphism 
    which preserves the markings of the boundaries. 
 \end{theorem}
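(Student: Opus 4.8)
The plan is to reduce Theorem \ref{distinc} to the invariance of the incidence matrix under strong diffeomorphisms, using the reconstruction result of Theorem \ref{markinc}. Suppose, for contradiction, that the two Milnor fibers $W(C_S,l_S)$ and $W(C'_{S'},l'_{S'})$ are orientation-preservingly diffeomorphic by a map $\phi$ that respects the markings of the boundaries. The key idea is that everything needed to recover the incidence matrix from a Milnor fiber is intrinsic to the pair (Milnor fiber, marked boundary): by Theorem \ref{markinc}, the incidence matrix is determined up to permutation of columns by this data. So first I would close each Milnor fiber with the cap $U$, attached along the framed link prescribed by Lemma \ref{attach} (generic fibers $f_i$ of the marked pieces, with framing coming from the plumbing structure increased by $1$). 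Since this attaching is dictated solely by the marking, and since topologically equivalent decorated germs yield the same marking data (as noted in the paragraph preceding the theorem), the diffeomorphism $\phi$ carries the attaching region of one closed manifold to that of the other up to isotopy.

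The central step is to extend $\phi$ across the caps. Because $\phi$ preserves the markings, it matches up the generic fibers $f_i$ and their framings on the two boundaries, up to isotopy; Lemma \ref{attach} then guarantees that the two copies of $U$ are glued along isotopic framed links. Hence I can extend $\phi$ to an orientation-preserving diffeomorphism $\tilde\phi\colon \tilde V \to \tilde V'$ between the two closed $4$-manifolds obtained by capping, carrying $W$ to $W'$ and $U$ to $U'$. This $\tilde\phi$ induces an isomorphism $\tilde\phi_*$ on $H_2$ which is an isometry of the intersection forms. Moreover, $\tilde\phi_*$ sends the distinguished classes $[\tilde\Sigma_i]$ to $[\tilde\Sigma'_i]$ (these are carried by the cores of the handles of $U$, which are matched by the marking) and sends $(-1)$-classes to $(-1)$-classes.

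Now I invoke the characterization of the incidence matrix furnished by Proposition \ref{eqmat} and the preceding lemma: the matrix $N(C_S,l_S)=([\tilde\Sigma_i]\cdot e_j)$, computed against the \emph{unique} (up to permutation) basis of $(-1)$-classes making all entries nonnegative, equals the incidence matrix up to permutation of columns. Since $\tilde\phi_*$ is an isometry preserving the classes $[\tilde\Sigma_i]$ and the set of $(-1)$-classes, it must carry the distinguished nonnegative basis $(e_j)$ on the first side to a nonnegative basis on the second side; by the uniqueness statement, this is the distinguished basis $(e'_j)$ up to permutation. Consequently $N(C_S,l_S)$ and $N(C'_{S'},l'_{S'})$ agree up to permutation of columns, and therefore so do the two incidence matrices $\mathcal{I}(C_S,l_S)$ and $\mathcal{I}(C'_{S'},l'_{S'})$ by Proposition \ref{eqmat}. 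This contradicts the hypothesis that they differ up to permutation of columns, completing the argument.

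The step I expect to be the main obstacle is justifying that $\phi$ actually extends to $\tilde\phi$ across the caps in an orientation-preserving, boundary-compatible way. The delicate point is that marking-preservation only gives us matching of the cores \emph{up to isotopy} of the attaching framed links, so one must argue carefully (via Lemma \ref{attach} and the canonical identification of the boundaries up to isotopy described after Proposition \ref{recofiber}) that isotopic attaching data yields diffeomorphic cappings and that the resulting homeomorphism on $H_2$ genuinely respects both the intersection pairing and the privileged classes $[\tilde\Sigma_i]$. Making the passage from ``isotopic attaching links'' to ``a diffeomorphism of the closed manifolds fixing the relevant homology classes'' precise is where the real work lies; the algebraic endgame with the uniqueness of the nonnegative $(-1)$-basis is then essentially formal.
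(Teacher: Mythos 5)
Your proposal is correct and follows essentially the same route as the paper: the paper derives Theorem \ref{distinc} as an immediate consequence of Theorem \ref{markinc}, whose proof is exactly your capping argument (attach $U$ along the marking-determined framed link of Lemma \ref{attach}, then apply Proposition \ref{eqmat} and the uniqueness of the nonnegative $(-1)$-basis). Your unpacking of the extension of the diffeomorphism across the cap and of the homological endgame is a faithful elaboration of what the paper leaves implicit.
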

 
 \begin{proof} This is an immediate consequence of Theorem \ref{markinc}. 
 \end{proof}

\subsection{Consequences for the existence of Stein fillings} 
  \label{conseq}

In section \ref{general}, we recalled the notion of abstract 
boundary $\partial(X,x)$ of an isolated singularity $(X,x)$ of
arbitrary dimension. In fact,
by considering the field of maximal complex subspaces of the tangent
space to a representative of the boundary, one gets a contact
structure. We denote it $(\partial(X,x), \xi(X,x))$ and we call it the
\emph{contact boundary} of $(X,x)$. It is well-defined up
to contactomorphisms which are unique up to isotopy 
(see \cite{V 80} and \cite{CNP 06}).

Any Milnor fiber of an isolated equidimensional singularity $(X,x)$
may be endowed with the structure of Stein manifold compatible with
the contact structure $\xi(X,x)$ on $\partial(X,x)$. 

By the main
theorem of \cite{CNP 06}, for normal \emph{surface} singularities the contact
structure $\xi(X,x)$ is a 
topological invariant of the germ, that is, the contact boundaries of
two singularities with homeomorphic oriented boundaries are
contactomorphic by an orientation-preserving diffeomorphism. Moreover,
as proved in \cite{CP 04}, this (unoriented) contact structure is left
invariant by all orientation-preserving diffeomorphisms of the
boundary $\partial(X,x)$, when this boundary is a rational homology sphere. 

This happens in particular for sandwiched singularities. Therefore, we
can speak about the \emph{standard contact structure} of a sandwiched
3-manifold, well-defined \emph{up to isotopy}. By
taking Milnor fibers of possibly non analytically equivalent but
topologically equivalent sandwiched singularities, one gets Stein
fillings of the same contact manifold (the boundaries being identified
by any orientation-preserving diffeomorphism). We get:

\begin{theorem} \label{lowbound}
  Let $(M, \xi)$ be a sandwiched 3-manifold endowed with its standard
  contact structure. Fix the topological type of a defining standard
  decorated germ.  Then 
  there are at least as many Stein fillings (up to diffeomorphisms
  fixed on the boundary) of $(M, \xi)$ as there are incidence matrices (up to
  permutations of columns) realised by the picture deformations of all
  the  decorated germs with the given topology. 
\end{theorem}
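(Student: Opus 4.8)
The plan is to reduce Theorem \ref{lowbound} to the recognition criterion already established in Theorem \ref{distinc}, by organizing the Stein fillings coming from the various picture deformations and counting them via their incidence matrices. The key observation is that each picture deformation of each standard decorated germ with the prescribed topology produces a Milnor fiber, which by the discussion preceding the theorem carries a Stein structure filling $(M,\xi)$, where the identification of its boundary with $(M,\xi)$ is the canonical one (well-defined up to isotopy). So one obtains a map from the set of picture deformations (ranging over all such decorated germs) to the set of Stein fillings of $(M,\xi)$ taken up to diffeomorphism fixed on the boundary.

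First I would make precise the role of the fixed topological type. Fixing the topology of a defining standard decorated germ fixes, by Proposition \ref{charsand} and the constructions of subsection \ref{mark}, the sandwiched graph and hence $M$, together with a marking of $M$. Since topologically equivalent decorated germs induce boundary diffeomorphisms preserving orientations and markings (as noted just before Theorem \ref{distinc}), all the Milnor fibers arising this way are fillings of \emph{the same} marked boundary. The next step is to pass from the factored-through statement to a genuine counting bound: I would argue that the map from incidence matrices (up to permutation of columns) to Stein fillings (up to boundary-fixing diffeomorphism) is \emph{injective}. This is exactly the contrapositive of Theorem \ref{distinc}: if two picture deformations (of possibly different but topologically equivalent standard decorated germs) gave diffeomorphic fillings by an orientation-preserving diffeomorphism fixed on the boundary, then in particular that diffeomorphism preserves the markings, so by Theorem \ref{markinc} (via Theorem \ref{distinc}) their incidence matrices coincide up to permutation of columns.

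Hence distinct incidence matrices force non-diffeomorphic (rel boundary) fillings, and the number of Stein fillings is at least the number of distinct incidence matrices realized. I would assemble these steps: produce the fillings, observe they all fill the same marked $(M,\xi)$, and invoke the injectivity just described to get the stated lower bound.

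The main obstacle I expect is not the injectivity — that is handed to us by Theorem \ref{distinc} — but the verification that the boundary identifications really are compatible across different decorated germs. Concretely, one must check that the canonical (up-to-isotopy) identification of each Milnor fiber's boundary with $\partial(X,x)$, described after Proposition \ref{recofiber}, is compatible with the marking-preserving diffeomorphisms induced by topological equivalence, so that a diffeomorphism \emph{fixed on the boundary} of $(M,\xi)$ is the same thing as a marking-preserving orientation-preserving diffeomorphism in the sense required by Theorem \ref{distinc}. Once one confirms that the fixed topological type pins down both $(M,\xi)$ up to isotopy of its standard contact structure (using \cite{CP 04} and \cite{CNP 06}) and the marking up to the relevant ambiguity, the counting argument closes immediately.
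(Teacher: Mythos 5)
Your proposal is correct and follows essentially the same route as the paper, which states Theorem \ref{lowbound} without a separate proof precisely because it is the combination you describe: the Milnor fibers of all picture deformations of topologically equivalent standard decorated germs are Stein fillings of the same $(M,\xi)$ with marking-compatible boundary identifications, and Theorem \ref{distinc} (via Theorem \ref{markinc}) makes the assignment of incidence matrices to fillings well defined, hence injective from matrices to rel-boundary diffeomorphism classes. Your observation that a diffeomorphism fixed on the boundary automatically preserves the marking is exactly the point that lets Theorem \ref{distinc} apply.
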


\subsection{An example} \label{examp}

We consider the sandwiched singularities $X(C,l)$ where $C$ is 
an ordinary 6-uple singularity and $l$ takes the constant value
$5$. The total multiplicity of each component of $C$ is equal to $1$
with respect to $C$, therefore one has to blow up 4 times on the
strict transforms of the branches of $C$, starting from their
separation produced by the blow-up of $0$. As a consequence, $(C,l)$
is a standard decorated germ and the
sandwiched singularity $X(C,l)$ has as dual graph of its minimal
resolution a star as in Figure \ref{fig:Sixuple}, where all the
vertices distinct from the central one have attached
self-intersection $-2$.  

\medskip 
\begin{figure}[h!] 
\labellist \small\hair 2pt 
\pinlabel{$-7$} at 430 123
\pinlabel{$(C,l)$} at 90 -10
\pinlabel{$X(C,l)$} at 414 -10
\pinlabel{$5$} at 178 125
\pinlabel{$5$} at 170 169
\pinlabel{$5$} at 145 199
\pinlabel{$5$} at 95 212
\pinlabel{$5$} at 35 201
\pinlabel{$5$} at 7 168
\endlabellist 
\centering 
\includegraphics[scale=0.55]{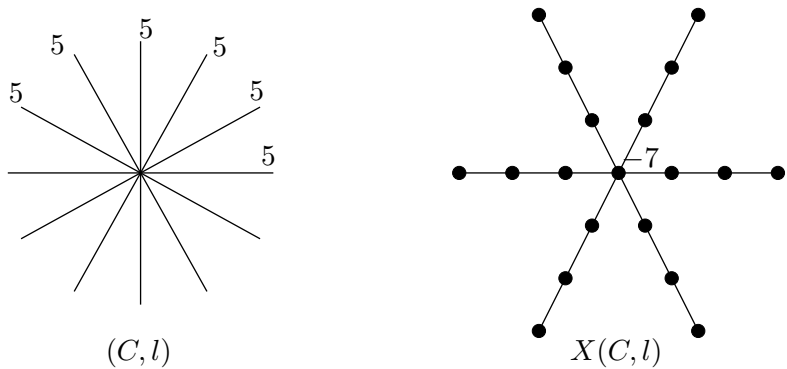} 
\vspace{2mm} \caption{A decorated ordinary germ and the
  associated sandwiched graph} 
\label{fig:Sixuple} 
\end{figure} 

In particular, one may choose for $C$ the union of six pairwise
distinct lines passing through the origin of $\C^2$. As explained in
\cite[page 501]{JS 98}, one gets \emph{in general} $323$ possibilities for
the incidence matrix of a picture deformation obtained by simply
moving the lines by parallelism. But for special positions of the 
lines, one gets \emph{one more} incidence matrix, the configuration
generating it being drawn in Figure \ref{fig:Speconf}. By Theorem
\ref{lowbound}, we see that the contact 
boundary of such sandwiched singularities has at least $324$ Stein
fillings, up to orientation-preserving diffeomorphisms fixed on the
boundary. We expect that these are \emph{all} the Stein fillings of that
contact manifold, up to strong diffeomorphisms.

\medskip 
\begin{figure}[h!] 
\centering 
\includegraphics[scale=0.55]{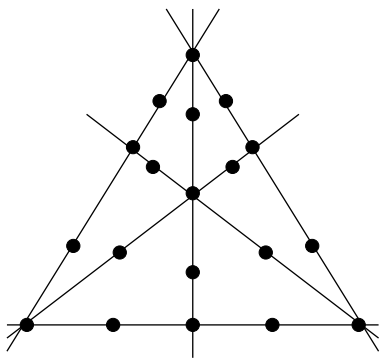} 
\vspace{2mm} \caption{The special picture deformation} 
\label{fig:Speconf} 
\end{figure}

\end{document}